\newtheorem*{rep@theorem}{\rep@title}
\newcommand{\newreptheorem}[2]{%
\newenvironment{rep#1}[1]{%
 \def\rep@title{#2 \ref{##1}}%
 \begin{rep@theorem}}%
 {\end{rep@theorem}}}
\numberwithin{equation}{section}
\theoremstyle{definition}
\theoremstyle{definition}
\newtheorem*{rmk}{Remark}
\theoremstyle{plain}
\newtheorem{theorem}{Theorem}[section]
\newtheorem{lemma}[theorem]{Lemma}
\newtheorem{cor}[theorem]{Corollary}
\newtheorem{Prop}[theorem]{Proposition}
\DeclarePairedDelimiter\floor{\lfloor}{\rfloor}
\newcommand{\beas}{\begin{eqnarray*}}
\newcommand{\eeas}{\end{eqnarray*}}
\newcommand{\bes} {\begin{equation*}}
\newcommand{\ees} {\end{equation*}}
\newcommand{\be} {\begin{equation}}
\newcommand{\ee} {\end{equation}}
\newcommand{\bea} {\begin{eqnarray}}
\newcommand{\eea} {\end{eqnarray}}
\newcommand{\bc}{\mathcal{M}}
\newcommand{\bt}{\boldsymbol{t}}
\newcommand{\D}{\mathbb{D}}
\newcommand{\clD}{\overline\D}
\newcommand{\eps}{\varepsilon}
\newcommand{\om}{\omega}
\newcommand{\V}{\mathcal{V}}
\newcommand{\wbar}{\overline{w}}
\newcommand{\zt}{\zeta}
\newcommand{\zbar}{\overline{z}}
\newcommand\partl[2]{\dfrac{\partial{#1}}{\partial{#2}}}
\newcommand\secpartl[3]{\dfrac{\partial^2{#1}}{\partial{#2}\partial{#3}}}
\newcommand{\Chess}{\operatorname{Hess}_{ \C}\!\:}
\newcommand{\JacC}{\operatorname{J}_\C\!\:}
\newcommand{\cont}{\mathcal{C}}
\newcommand{\hol}{\mathcal{O}}
\newcommand{\rea}{\operatorname{Re}}
\newcommand{\ima}{\operatorname{Im}} 
\newcommand{\wt}{\widetilde}
\newcommand{\dist}{\operatorname{dist}}
\newcommand{\CC}{\mathbb{C}^2}
\newcommand{\Cn}{\mathbb{C}^n}
\newcommand{\C} {\mathbb{C}} 
\newcommand{\rl}{\mathbb{R}}
\newcommand{\pnat} {\mathbb{N}_+} 
\newcommand{\N} {\mathbb{N}}
\newcommand{\std}{_{\operatorname{st}}}
\begin{document}

\title[Polynomially convex embeddings]{Polynomially convex embeddings of even-dimensional compact manifolds}

\author{Purvi Gupta \and Rasul Shafikov}
\address{Department of Mathematics, Rutgers University\\
New Brunswick, NJ 08854, U.S.A.\\
purvi.gupta@rutgers.edu }

\address{Department of Mathematics, University of Western Ontario\\
Middlesex College,  London, Ontario N6A 5B7, Canada\\
shafikov@uwo.ca}

\begin{abstract} The totally-real embeddability of any $2k$-dimensional compact manifold $M$ into $\C^{n}$, $n\geq 3k$, has several consequences: the genericity of polynomially convex embeddings of $M$ into $\Cn$, the existence of $n$ smooth generators for the Banach algebra $\cont(M)$, the existence of nonpolynomially convex embeddings with no analytic disks in their hulls, and the existence of special plurisubharmonic defining functions. We show that these results can be recovered even when $n=3k-1$, $k>1$, despite the presence of complex tangencies, thus lowering the known bound for the optimal $n$ in these (related but inequivalent) questions. 
\end{abstract}

\maketitle

\section{Introduction and main results}\label{sec_intro}

Polynomial convexity is an important notion largely owing to the Oka-Weil theorem which states that holomorphic functions in a neighbourhood 
of a polynomially convex set $M$ (see Section~\ref{sec_backmat} for relevant definitions) can be approximated uniformly on $M$ by holomorphic polynomials. Although polynomial convexity imposes topological restrictions on $M$, it is known that if $M$ is a 
nonmaximally totally real submanifold of $\C^n$,
it can be deformed via a small perturbation into a polynomially convex one, as proved by Forstneri{\v c}-Rosay~\cite{FoRo93}, 
Forstneri{\v c}~\cite{Fo94}, and L\o w-Wold~\cite{LoWo09}. The condition that {\it any} abstract 
$m$-dimensional compact real manifold admits a totally real embedding into $\mathbb C^n$ is well understood: one must have $\floor{\frac{3m}{2}} \le n$. Thus, any $m$-dimensional compact manifold can be embedded as a totally real
polynomially convex submanifold of $\mathbb C^n$ provided that $n\ge \floor{\frac{3m}{2}}$ and $(m,n)\neq (1,1)$. 
  
The bound discussed above is sharp for manifolds without boundary, see~\cite{HoJaLa12}. That is, if $n<\floor{\frac{3m}{2}}$, then certain $m$-dimensional compact manifolds necessarily acquire complex tangent directions when embedded into $\Cn$. The  
points where the tangent space of $M\subset\Cn$ contains complex directions are called the {\it CR-singularities} of $M$. 
CR-singularities encode topological information about $M$, such as its 
Euler characteristic and Pontryagin numbers; see Lai~\cite{La72}, Webster~\cite{We85}, and Domrin~\cite{Do95}. The simplest nontrivial case of CR-singularities is that of {\it complex points} of a real surface in $\mathbb C^2$, first studied in the seminal work of Bishop~\cite{Bi65}. Different types of complex points can endow the surface with different local convexity properties (see Section \ref{sec_backmat} for derails). Regardless of this, a surface in $\mathbb C^2$ can never be globally polynomially convex.

In this paper we consider the only other case when CR-singularities are generically discrete and $m<n$, namely when $m=2k$ and 
$n=3k-1$, $k>1$, (if $m\geq  n$, a smooth $M\subset\Cn$ can never be polynomially convex; see Stout~\cite[Section~2.3]{St07}). 
Beloshapka~\cite{Be97} for $k=2$, and Coffman~\cite{Co97} for all $k\geq 2$, constructed the normal form \eqref{eq_BCform} for generic 
CR-singularities of this kind. Our principal result is to show
that, unlike the case of complex points of real surfaces, $M$ is locally polynomially convex near {\em any} such CR-singularity, 
and as a result, there exists a polynomially convex embedding of $M$ in $\C^{3k-1}$. More precisely the following holds.

\begin{theorem}\label{thm_main} 
Suppose $M$ is a $2k$-dimensional ($k>1$) smooth compact connected submanifold (closed or with boundary) 
of $\C^{3k-1}$. Then, given any $s\geq 2$, there exists a $\cont^s$-small perturbation $M'$ of $M$ that is polynomially convex. The submanifold 
$M'$ is totally real with finitely many generic CR-singularities.
\end{theorem}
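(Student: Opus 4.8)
The plan is to handle the (generically isolated) CR‑singularities by an explicit local argument, handle the totally real part by the known perturbation theory, and then glue. For the \emph{first reduction}: as recalled in the introduction, a generic $2k$‑submanifold of $\C^{3k-1}$ has only discrete CR‑singularities, so a generic $\cont^s$‑small perturbation leaves $M$ with only finitely many CR‑singularities $p_1,\dots,p_N$; a further generic $\cont^s$‑small perturbation makes the quadratic part nondegenerate at each $p_j$, and then by Beloshapka~\cite{Be97} and Coffman~\cite{Co97} a local biholomorphism brings $M$ near $p_j$ to the normal form \eqref{eq_BCform}. Off $\{p_1,\dots,p_N\}$ the perturbed submanifold is totally real.

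\smallskip
\noindent\emph{Local polynomial convexity at a CR‑singularity.} This is the crux, and the one place where $k>1$ matters locally: I would show that each $p_j$ has a neighbourhood $\omega_j$ in $M$ with $\overline{\omega_j}$ polynomially convex --- in sharp contrast to elliptic complex points of surfaces in $\C^2$, whose local hulls are filled by Bishop disks. Starting from \eqref{eq_BCform}, the goal is to produce on a small ball $B_j\ni p_j$ a nonnegative strictly plurisubharmonic function $\rho_j$ vanishing to second order exactly along $M\cap B_j$ (a plurisubharmonic defining function of the kind advertised in the abstract). Its small sublevel sets are then Stein, and a direct estimate on the model \eqref{eq_BCform} should yield $\widehat{\overline{\omega_j}}\subseteq\{\rho_j=0\}\cap B_j=\overline{\omega_j}$, hence equality. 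Heuristically, the $k-1$ complex ambient directions available beyond the surface case leave enough room to build such a $\rho_j$, so that no analytic disk can be trapped near $p_j$.

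\smallskip
\noindent\emph{Globalization and gluing.} Fix disjoint balls $\overline{B_j}\ni p_j$ as above and shrink to $B_j'\Subset B_j$. Then $K:=M\setminus\bigcup_j B_j'$ is a compact totally real submanifold (possibly with boundary), and since $\dim_\rl K=2k<3k-1$ --- which is where $k>1$ is used globally --- the Forstneri{\v c}--Rosay~\cite{FoRo93}, Forstneri{\v c}~\cite{Fo94}, L{\o}w--Wold~\cite{LoWo09} perturbation theory provides a $\cont^s$‑small perturbation of $K$, fixed near $\bigcup_j\partial B_j$ and near $\{p_1,\dots,p_N\}$, that is polynomially convex; gluing it to the untouched pieces near the $p_j$ produces the candidate $M'$. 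To conclude $\widehat{M'}=M'$, I would patch the local $\rho_j$ with the global strictly plurisubharmonic function supplied by the totally real step (using $\max$'s and cutoff corrections, in the spirit of Forstneri{\v c}--Rosay) into a nonnegative plurisubharmonic $\rho$ on a large ball $B$ containing the convex hull of $M'$ which near $M'$ is strictly plurisubharmonic with zero set exactly $M'$ and is strictly positive on the remainder of $B$; a standard plurisubharmonicity argument then gives $\widehat{M'}\subseteq\{\rho=0\}\cap B=M'$. (Alternatively, induct on $N$, at each step writing $M'$ as the union of the polynomially convex sets $M'\setminus B_N'$ and $M'\cap\overline{B_N}$ --- with polynomially convex intersection --- and applying a Kallin‑type gluing lemma fed by $\rho_N$.)

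\smallskip
\noindent\emph{Main obstacle.} The transversality reduction and the totally real perturbation are by now standard; the genuinely new ingredient is the local statement, i.e.\ polynomial convexity at \emph{every} generic CR‑singularity of a $2k$‑manifold in $\C^{3k-1}$ with $k>1$. Since this fails for $k=1$, the proof must exploit the ambient codimension in an essential way, presumably by exhibiting the plurisubharmonic defining function directly from \eqref{eq_BCform} and ruling out persistent analytic disks in the local hull. A secondary technical difficulty is making the globalization genuinely \emph{relative}: the totally real perturbation must leave the fixed neighbourhoods of the $p_j$ undisturbed, and the local and global plurisubharmonic functions must be patched compatibly, all while keeping the total perturbation $\cont^s$‑small.
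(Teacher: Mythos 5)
Your global outline (transversality $\to$ normal form $\to$ local convexity at each CR-singularity $\to$ relative totally-real perturbation $\to$ gluing) is the same as the paper's, and you correctly identify the local polynomial convexity at a generic CR-singularity as the crux. But that step --- which you yourself call ``the main obstacle'' and ``the genuinely new ingredient'' --- is left entirely unproven: you say a strictly p.s.h.\ defining function for $\bc_k$ ``should'' exist because ``the $k-1$ complex ambient directions leave enough room,'' which is a heuristic, not an argument. The paper handles this step by a much more elementary device (Lemma~\ref{lem_BC}): project $\bc_k$ onto the real coordinates $(w_1,\dots,w_{2k-2})$ via a map whose components lie in $\mathcal P(\bc_k)$, invoke the fibration criterion for polynomial convexity (an iterate of \cite[Thm.~1.2.16]{St07}), and observe that each fiber is a graph over $z\in\overline{\D}_\eps$ whose coordinate functions include $\zbar^2$, so that the algebra generated by $z$ and $\zbar^2$ is all of $\cont(\overline{\D}_\eps)$ by Minsker's theorem~\cite{MI76}. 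No plurisubharmonic defining function is needed for Theorem~\ref{thm_main}; the p.s.h.\ construction you gesture at (and which does ultimately work) is reserved for the stronger Theorem~\ref{thm_deffn}, where it takes an entire appendix (Lemma~\ref{lem_Slaparfn} and Proposition~\ref{prop_Slapar}) built on Slapar's surface-case analysis, including the auxiliary maps $f^\sigma_\alpha$ and the correction term $g$ to cut $M_\alpha^\sigma$ down to $\bc_k$. Your sketch does not come close to supplying these ingredients.

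A secondary gap is in the globalization. You cite the Forstneri\v c--Rosay / Forstneri\v c / L\o w--Wold perturbation results, but those do not by themselves furnish the \emph{relative} statement you need: a $\cont^s$-small totally-real perturbation that is the identity near a prescribed polynomially convex compact $K$ and makes $K\cup f_\eps(N)$ polynomially convex. You flag ``making the globalization genuinely relative'' as a ``secondary technical difficulty'' but leave it unresolved. The paper uses precisely the tool designed for this, Arosio--Wold~\cite[Thm.~1.4]{ArWo17}, together with Lemma~\ref{lem_union} (a Kallin-lemma consequence) to prepare the set $K=\overline W$; with that theorem in hand the gluing is immediate and no patched p.s.h.\ exhaust is required. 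Your alternative ``patch the local $\rho_j$ with a global strictly p.s.h.\ function'' route could in principle work but again presupposes the unconstructed $\rho_j$ and the unspecified relative perturbation.
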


The question of the optimal $n$ that allows polynomially convex smooth embeddability of all $m$-dimensional manifolds into $\Cn$ was raised in \cite[Question 4.]{IzSt18}. Theorem \ref{thm_main} improves previously known bounds. We note that if the embedding is merely required to be topological, then Vodovoz and Zaidenberg have shown that the optimal value of $n$ is $m+1$ for all $m\geq 1$ (see \cite{VoZa71}). Our proof is based on the idea of perturbation of $M$ away from the set of CR- singularities where $M$ is already locally polynomially 
convex; a general result of this type is contained in Arosio-Wold~\cite{ArWo17}.
When $M$ has nonempty boundary, $M'$ can be further perturbed to be totally real and polynomially 
convex. This can be done by `pushing' any CR-singularity of $M'$ to one of its boundary components and then removing a thin collar neighbourhood of the boundary, leaving the manifold with no CR-singularities. A small perturbation can now be used to further make it polynomially convex. 

We now use Theorem~\ref{thm_main} to produce generators of the Banach algebra of continuous complex-valued functions over a smooth compact
real manifold. First consider an elementary example. Any continuous function on the circle $S^1\subset\C_z$ can be uniformly approximated on $S^1$ by a sequence of polynomial combinations of $z$ and $1/z$. This follows from the Stone-Weierstrass approximation theorem.  Generally, given a real manifold $M$ we say that $\cont^\ell(M)$, the space of $\ell$-times continuously differentiable functions on $M$, has {\em $n$-polynomial density} if there is a tuple $F=(f_1,...,f_n)$ of $n$ functions 
in $\cont^\infty(M)$ such that the set
\bes
\{P\circ F :P\text{ is a holomorphic polynomial on}\ \Cn\}
\ees
is dense in $\cont^\ell(M)$. If $F$ exists, we call $\{f_1,...,f_n\}$ a {\em PD-basis} of $\cont^\ell(M)$. The notions of rational density and an RD-basis can be defined analogously. The existence of $2$-RD bases for surfaces is discussed in Shafikov-Sukhov~\cite{ShSu15}. The combined use of the Oka-Weil theorem and an approximation result by Nirenberg-Wells~\cite[Theorem 1]{NiWe67} shows that the components of a totally real and polynomially convex embedding
$F: M \hookrightarrow \mathbb C^n$ give a PD-basis of $C^\ell(M)$ (see \cite{GuSh17} for more details). Thus, any compact real $m$-manifold has $n=\floor{\frac{3m}{2}}$-polynomial density. For $\ell\geq 1$, this is the optimal value of $n$ for which $\cont^\ell(M)$ has $n$-polynomial density for {\it all} $m$-dimensional manifolds, but for $\ell=0$ the optimal $n$ appears to be somewhere in the range $m<n<\floor{\frac{3m}{2}}$ (see \cite{VoZa71} for the case of continuous generators). While it is an open problem to find this optimal $n$, Theorem~\ref{thm_main} gives the following improvement for even-dimensional manifolds. 

\begin{cor}\label{cor_dens}
Let $M$ be a $2k$-dimensional ($k>1$) compact manifold. Then, $\cont(M)$ has $(3k-1)$-polynomial density. Further, if $M$ has nonempty boundary, then $\cont^\ell(M)$ has $(3k-1)$-polynomial density for all $\ell\geq 0$.
\end{cor}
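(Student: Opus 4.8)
The plan is to deduce both statements from Theorem~\ref{thm_main} by adapting the argument of \cite{GuSh17}, which combines the Oka--Weil theorem with the local holomorphic approximation on totally real submanifolds due to Nirenberg--Wells~\cite{NiWe67}. First one realizes $M$ inside $\C^{3k-1}$: by the Whitney embedding theorem the compact $2k$-manifold $M$ embeds smoothly into $\mathbb{R}^{4k}=\mathbb{C}^{2k}$, and $\mathbb{C}^{2k}\subseteq\mathbb{C}^{3k-1}$ since $k>1$. Applying Theorem~\ref{thm_main} (the perturbation there may be taken smooth) one obtains a smooth embedding $F=(f_1,\dots,f_{3k-1})\colon M\hookrightarrow\C^{3k-1}$, with each $f_i\in\cont^\infty(M)$, whose image $M'$ is polynomially convex and totally real outside a finite set $E$ of generic CR-singularities. (If $M$ is disconnected one applies Theorem~\ref{thm_main} componentwise and translates the resulting pieces into pairwise disjoint closed balls, whose union is again polynomially convex, so we may as well assume $M$ connected.)

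For the first statement it suffices to prove that $P(M')=\cont(M')$: pulling back by the diffeomorphism $F\colon M\to M'$ then shows that $\{P\circ F : P\text{ a holomorphic polynomial}\}$ is dense in $\cont(M)$, i.e.\ that $\{f_1,\dots,f_{3k-1}\}$ is a PD-basis of $\cont(M)$. Since $M'$ is polynomially convex, the Oka--Weil theorem reduces the claim to approximating an arbitrary $g\in\cont(M')$ uniformly on $M'$ by functions holomorphic in a neighbourhood of $M'$. On $M'\setminus E$ this is the Nirenberg--Wells theorem for the totally real submanifold $M'\setminus E$; near a point of $E$ one invokes the local polynomial convexity of $M'$ furnished by Theorem~\ref{thm_main} together with the accompanying local, Mergelyan-type, approximation on a small piece of $M'$ through the CR-singularity. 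These local statements are assembled into a global one via Bishop's localization theorem for the uniform algebra $P(M')$ (one checks that each of its maximal sets of antisymmetry reduces to a point). I expect this last step --- in particular, upgrading \emph{local polynomial convexity} near a CR-singularity in Beloshapka--Coffman normal form to genuine \emph{local uniform approximation} there, and then globalizing --- to be the main technical point, the rest being formal.

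For the second statement, assume $\bdy M\neq\emptyset$. Following the remark after Theorem~\ref{thm_main}, one pushes every CR-singularity of $M'$ into a boundary component and deletes a thin collar of $\bdy M'$; the result is a compact submanifold $\wt M\subset\C^{3k-1}$, diffeomorphic to $M$, that is totally real \emph{everywhere}. Since $k>1$ gives $2k<3k-1$, $\wt M$ is non-maximally totally real, so by Forstneri{\v c}--Rosay~\cite{FoRo93} (see also \cite{Fo94,LoWo09}) an arbitrarily small perturbation of $\wt M$ is polynomially convex; call $\wt F$ the resulting totally real, polynomially convex, smooth embedding. Now the argument of \cite{GuSh17} applies, this time in the $\cont^\ell$-norm for every $\ell\ge0$ --- which is legitimate precisely because $\wt M$ has no complex tangencies, so that Nirenberg--Wells delivers $\cont^\ell$-approximation --- and shows that the components of $\wt F$ form a PD-basis of $\cont^\ell(\wt M)$ for each $\ell\ge0$. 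Transporting through a diffeomorphism $M\cong\wt M$ converts this into a PD-basis of $\cont^\ell(M)$, so $\cont^\ell(M)$ has $(3k-1)$-polynomial density for all $\ell\ge0$.
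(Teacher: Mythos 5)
For the case of a closed manifold, your proof departs from the paper's and, as you yourself flag, contains a genuine gap. You propose to obtain $\mathcal{P}(M')=\cont(M')$ by combining Nirenberg--Wells on the totally real part $M'\setminus E$ with some unspecified ``Mergelyan-type'' local approximation near each CR-singularity, and then to globalize via Bishop's antisymmetry decomposition. But the local polynomial convexity near a Beloshapka--Coffman singularity (Lemma~\ref{lem_BC}) does \emph{not} by itself give local uniform approximation by polynomials there; $M'$ is not totally real at such a point, so Nirenberg--Wells does not apply, and you offer no substitute. This is precisely the step you label ``the main technical point, the rest being formal'' --- but it is this point that carries the content of the corollary, so a proof that leaves it open is not a proof. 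The paper avoids this local-to-global difficulty entirely by invoking the theorem of O'Farrell--Preskenis--Walsh: since the exceptional set $X_0=S$ consists of finitely many points, any $f\in\cont(F(M))$ trivially satisfies the hypothesis (its restriction to $X_0$ can even be matched exactly by a polynomial), and the theorem directly yields uniform approximation on all of $F(M)$ by functions holomorphic in a neighbourhood; Oka--Weil then converts these into polynomials. Your antisymmetry-set strategy might be made to work, but that would require proving the local approximation statement from scratch, which is not easier than citing O'Farrell--Preskenis--Walsh.

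For the case $\bdy M\neq\emptyset$ your argument is essentially the paper's: push the singularities to the boundary, delete a collar, perturb to a totally real polynomially convex embedding, then approximate in $\cont^\ell$. One small caution: after obtaining a function holomorphic near $F(M)$ that is $\cont^\ell$-close to $g$ (the paper cites Range--Siu for this; Nirenberg--Wells as stated gives only $\cont^0$), you still need to pass to a polynomial in the $\cont^\ell$-norm. Oka--Weil only gives sup-norm control, so one needs the Cauchy-estimate step on a slightly larger polynomially convex neighbourhood, as in the paper's proof. You defer this to the argument in \cite{GuSh17}, which is acceptable provided that reference carries out this step, but it is worth making explicit since it is the reason $\ell\geq 1$ requires the stronger (totally real everywhere) embedding.
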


Our techniques also allow us to improve another dimensional bound of interest in the study of polynomial hulls. In \cite{IzSt18}, Izzo and Stout show that any surface can be embedded in $\C^3$ so as to have nonpolynomially convex image with no analytic disk in its hull. They then pose the following question. {\em For a fixed $m\geq 3$, what is the smallest $n$ such that every compact $m$-dimensional smooth manifold can be smoothly
embedded into $\Cn$ as some $\Sigma$ with $\widehat\Sigma\setminus\Sigma$ nonempty but
containing no analytic disk, i.e., there is no nonconstant holomorphic map from the unit disk into $\widehat\Sigma\setminus\Sigma$?} In \cite{ArWo17}, it is shown that if the embedding is also required to be totally real, then the optimal value of $n$ is $\lfloor{3m/2\rfloor}$, for any $m\geq 2$. In \cite{Iz18}, it is shown that the constructions in \cite{IzSt18} and \cite{ArWo17} can be done so that the rational and polynomial hulls of the embeddings coincide. In our next result, we show that the answer to the original question is strictly less than $\lfloor{3m/2\rfloor}$ for even-dimensional manifolds. 

\begin{Prop}\label{thm_anstrhull} For any $2k$-dimensional compact manifold $M$, there is a smooth embedding of $M$ into $\C^{3k-1}$ with image $\Sigma$ so that $\widehat{\Sigma}\setminus \Sigma$ is nonempty but contains no analytic disk, and $\widehat \Sigma=h_r(\Sigma)$, the rationally convex hull of $\Sigma$.	
\end{Prop}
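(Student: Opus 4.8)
\emph{Sketch of proof.} The plan is to combine Theorem~\ref{thm_main} with a localized version of the hull-producing perturbations of Izzo--Stout~\cite{IzSt18} and Arosio--Wold~\cite{ArWo17}, together with the rational-convexity refinement of Izzo~\cite{Iz18}. Since $\dim M=2k$ and $6k-2\ge 4k$, one first embeds $M$ smoothly into $\C^{3k-1}$, and then applies Theorem~\ref{thm_main} (componentwise, keeping the images of distinct components in pairwise disjoint balls, so that by Kallin's lemma their union remains polynomially convex) to obtain a polynomially convex embedded copy $M'$ of $M$ that is totally real outside a finite set $E$ of generic CR-singularities. Fix $p\in M'\setminus E$ and holomorphic coordinates on a ball $B$ about $p$ in which $M'\cap B$ is an open piece of a totally real affine $2k$-plane $P_0$.

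Next one assembles a ``gadget''. Because the constructions of \cite{IzSt18} and \cite{ArWo17} are, at heart, perturbations of a totally real plane supported in a ball, they provide a smooth totally real embedded $2$-disk $G\subset\C^3$ which is flat (a piece of an affine $2$-plane) near $\bdy G$, satisfies $\widehat G\setminus G\neq\emptyset$ with $\widehat G\setminus G$ free of nonconstant analytic disks, and --- by \cite{Iz18} --- satisfies $\widehat G=h_r(G)$; after a harmless affine adjustment we may also assume $\widehat G\setminus G$ is not contained in the $2$-plane of the flat part of $G$. As $k>1$ we have $2k-2\le 3k-4$, so, writing $\C^{3k-1}=\C^3\times\C^{3k-4}$ and fixing a totally real $(2k-2)$-plane $\Pi\cong\rl^{2k-2}$ in $\C^{3k-4}$ together with a closed ball $\overline{B_0}\subset\C^{3k-4}$, the set $\widetilde G:=G\times(\Pi\cap\overline{B_0})$ is a smooth totally real submanifold of $\C^{3k-1}$, diffeomorphic to a closed $2k$-ball and flat near its boundary. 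By the product formula $\widehat{A\times B}=\widehat A\times\widehat B$ and its analogue for rational hulls, $\widehat{\widetilde G}=(\widehat G)\times(\Pi\cap\overline{B_0})=h_r(\widetilde G)$, so $\widehat{\widetilde G}\setminus\widetilde G=(\widehat G\setminus G)\times(\Pi\cap\overline{B_0})$ is nonempty and contains no nonconstant analytic disk (any such disk would project either to a nonconstant analytic disk in $\widehat G\setminus G$ or to a nonconstant holomorphic map into the totally real set $\Pi\cap\overline{B_0}$, both impossible). After shrinking the gadget if necessary so that $\widehat{\widetilde G}\Subset B$, and using that $M'\cap B$ and $\widetilde G$ are both pieces of totally real $2k$-planes near their flat boundaries, one performs a $\cont^s$-small isotopy supported in $B$ that replaces a coordinate $2k$-ball $D_1\subset M'\cap B$ by this (rescaled) $\widetilde G$. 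Since $\widetilde G$ is diffeomorphic to a $2k$-ball, the result is a smooth embedded copy $\Sigma$ of $M$, still totally real outside $E$, with $\Sigma\setminus B=M'\setminus B$ and $\Sigma\cap B=\widetilde G\cup\bigl((P_0\cap B)\setminus D_1\bigr)$.

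It then remains to control $\widehat\Sigma$. Since $\Sigma$ coincides off $B$ with the polynomially convex set $M'$ and $\widehat{\widetilde G}\Subset B$, a localization argument for polynomial hulls (the local maximum modulus principle; see \cite{St07}) should yield $\widehat\Sigma\subseteq M'\cup\widehat{\widetilde G}$ and $\widehat\Sigma\cap B\subseteq\widehat{\widetilde G}\cup\bigl((P_0\cap B)\setminus D_1\bigr)$, while trivially $\widehat{\widetilde G}\subseteq\widehat\Sigma$. Granting this, $\widehat\Sigma\setminus\Sigma$ is nonempty (a point $q_0\in\widehat G\setminus G$ off the plane of the flat part of $G$ produces a point of $\widehat{\widetilde G}\setminus\widetilde G$ lying in $B$ but not in $\Sigma$), and $\widehat\Sigma\setminus\Sigma$ contains no nonconstant analytic disk: if $\phi\colon\D\to\widehat\Sigma\setminus\Sigma$ were one, then $\phi(\D)$ avoids $\Sigma\supseteq\widetilde G$, so on the open set $\phi^{-1}(M'\setminus\widehat{\widetilde G})$ the map $\phi$ takes values in $M'$, on the open set $\phi^{-1}(\widehat{\widetilde G}\setminus M')$ it takes values in $\widehat{\widetilde G}\setminus\widetilde G$, and on the interior of $\phi^{-1}(M'\cap\widehat{\widetilde G})$ it again takes values in $M'$; since $M'$ (being totally real off the finite set $E$) and $\widehat{\widetilde G}\setminus\widetilde G$ admit no nonconstant analytic disks, $\phi'$ vanishes on each of these sets, hence on the dense open complement of the frontier of $\phi^{-1}(M'\cap\widehat{\widetilde G})$, forcing $\phi'\equiv0$ and $\phi$ constant. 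Finally $\widehat\Sigma=h_r(\Sigma)$: one inclusion is automatic, and the other follows by combining $\widehat\Sigma\subseteq M'\cup\widehat{\widetilde G}$ with $\widehat{M'}=M'=h_r(M')$, with $\widehat{\widetilde G}=h_r(\widetilde G)\subseteq h_r(\Sigma)$ (as $\widetilde G\subseteq\Sigma$), and with the same localization. The main obstacle is precisely the global hull identity $\widehat\Sigma\subseteq M'\cup\widehat{\widetilde G}$: one must ensure that the perturbation, while destroying polynomial convexity, creates no hull beyond $M'$ and the planted gadget, which is where the polynomial convexity supplied by Theorem~\ref{thm_main} (including local polynomial convexity at the CR-singularities of $E$) and a sufficiently small, deeply interior support for the perturbation enter, through the local maximum modulus principle.
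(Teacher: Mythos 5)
Your proposal is in the same spirit as the paper's proof (plant a hull-producing ``gadget'' in an otherwise polynomially convex copy of $M$), but the step you yourself flag as ``the main obstacle'' is in fact a genuine gap, and the local maximum modulus principle does not fill it. After cutting a disk $D_1$ out of the flat piece $M'\cap B\subset P_0$ and grafting in $\widetilde G$, there is no a priori reason for the inclusion $\widehat\Sigma\subseteq M'\cup\widehat{\widetilde G}$. The local maximum modulus principle controls where local peak points of $\widehat\Sigma$ may lie, but removing a totally real disk from a polynomially convex manifold and replacing it with a topologically identical but geometrically different piece can create hull that is neither in $M'$ nor in $\widehat{\widetilde G}$; ``small support'' and ``deeply interior'' do not by themselves prevent this, because polynomial convexity is a global condition and is not stable under cut-and-paste of totally real pieces. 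Your proof therefore does not close.

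The paper avoids this by not doing a cut-and-paste at all. It places an Alexander set $E$ (from \cite{Al98}) directly inside a totally real tube $U\subset M\subset\C^{3k-1}$ (no product gadget, no disk replacement), uses Lemma~\ref{lem_union} to find a neighbourhood $W$ of the CR-singularities so that $K:=\widehat E\cup\overline W$ is polynomially convex, and then invokes the Arosio--Wold theorem \cite[Theorem~1.4]{ArWo17} with $N=M\setminus W$, $f$ the inclusion, and this $K$. That theorem outputs a totally real perturbation $f_\eps$, fixed near $f^{-1}(K)$, with the exact hull identity $\widehat{K\cup f_\eps(N)}=K\cup f_\eps(N)$; this immediately yields $\widehat\Sigma=\Sigma\cup\widehat E$ for $\Sigma=f_\eps(N)\cup W$, which is the control your sketch needs but does not obtain. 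The Arosio--Wold statement is precisely designed to prevent the perturbation from generating extra hull, and it is the right tool here; the local maximum modulus principle is not. (Your product-gadget construction and the generalized-argument-principle/Stolzenberg step for $\widehat\Sigma=h_r(\Sigma)$ are fine in themselves, but they only become usable once the hull identity is actually established by an Arosio--Wold--type argument rather than asserted.)
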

Our results show that, in spite of the presence of CR-singularities, $2k$-submanifolds  in $\C^{3k-1}$ behave like totally real submanifolds of $\Cn$ when it comes to polynomial convexity and hulls. A finer analysis of the Beloshapka-Coffman normal form allows us to recover, albeit with slight modifications, some more properties satisfied by totally real submanifolds of $\Cn$. For instance, every polynomially convex compact set $K\subset\Cn$ is the zero locus of a smooth nonnegative plurisubharmonic (p.s.h.) function on $\Cn$ that is strictly p.s.h. outside of $K$ (see \cite[Theorem 1.3.8]{St07}). If we additionally assume that $K$ is a totally real submanifold, then the function can be chosen to be strictly p.s.h. everywhere. This follows from the fact that for any totally real submanifold $M\subset\Cn$, the square-distance function, $\operatorname{dist}^2(\cdot, M)$, gives a locally defined strictly p.s.h. defining function for $M$. This local defining function also grants a symplectic property to rationally convex totally real submanifolds: any such $M^m\subset\Cn$ is {\em Lagrangian} (or {\em isotropic} if $m<n$) with respect to some K{\" a}hler form $\om$ on $\Cn$, i.e., $\iota^*\om=0$, where  $\iota: M\hookrightarrow\Cn$ is the inclusion map (see Duval-Sibony \cite{DuSi95}). We obtain analogous results for $2k$-manifolds with generic CR-singularities. We note that the construction of p.s.h. defining functions with additional properties is of independent interest in the literature (\cite{Sl04}), and is related to the existence of regular Stein neighbourhood bases. 

\begin{theorem}\label{thm_deffn} Let $M\subset\C^{3k-1}$ be a $2k$-dimensional smooth compact connected submanifold that is totally real except on a finite set of generic CR-singularities, say $S$. Then, 
	\begin{enumerate}
\item $M=\rho^{-1}(0)$ where $\rho$ is a smooth nonnegative function on some neighbourhood $U$ of $M$ and is strictly p.s.h. on $U\setminus S$. 
\item If $M$ is rationally convex, then $M$ is isotropic with respect to $dd^c\varphi$, for some p.s.h. function $\varphi$ on $\C^{3k-1}$ that is strictly p.s.h. on $\C^{3k-1}\setminus S$. 
\item If $M$ is polynomially convex, then $M=\rho^{-1}(0)$ where $\rho$ is a smooth nonnegative function on $\C^{3k-1}$ and is strictly p.s.h. on $\C^{3k-1}\setminus S$. 
\end{enumerate}
\end{theorem}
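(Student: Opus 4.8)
The plan is to treat all three parts by the same two-step scheme: first construct the required object \emph{locally}, then \emph{patch}. Away from the CR-singularities $M$ is totally real, so near such points one has the usual local models -- the squared-distance function $\dist^2(\cdot,M)$, which near $M$ is smooth, nonnegative, vanishes to second order exactly on $M$ and is strictly plurisubharmonic, and (for part (2)) a local strictly plurisubharmonic function whose complex Hessian restricts to $0$ along $M$ (such a function exists for any totally real submanifold). The one genuinely new local input is to be obtained from a refined analysis of the Beloshapka-Coffman normal form \eqref{eq_BCform}: near each $p\in S$ there should be (i) a smooth $\rho_p\geq 0$ on a ball $W_p\ni p$ with $\rho_p^{-1}(0)=M\cap W_p$ that is strictly plurisubharmonic on $W_p\setminus\{p\}$, and (ii) a smooth $\varphi_p$ on $W_p$, plurisubharmonic and strictly so off $p$, with $dd^c\varphi_p$ restricting to $0$ along $M\cap W_p$. (The degeneracy at $p$ in (i) and (ii) is forced: $T_pM$ contains complex directions, on which any genuine K\"ahler form is positive.) Extracting (i) and (ii) from \eqref{eq_BCform} is the part that actually requires work; the rest is assembly.

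For part (1), I would cover $M$ by finitely many sets on which it is totally real together with the balls $W_p$, $p\in S$, put the squared-distance model on the former and $\rho_p$ on the latter, choose a partition of unity $\{\theta_\alpha\}$ subordinate to this cover and a sufficiently thin tubular neighbourhood $U$ of $M$, and set $\rho=\sum_\alpha\theta_\alpha\rho_\alpha$. Then $\rho\geq 0$ and $\rho^{-1}(0)=M$. At any $q\in M$ one has $\rho_\alpha(q)=0$ and $d\rho_\alpha(q)=0$, so $\Chess\rho(q)=\sum_\alpha\theta_\alpha(q)\,\Chess\rho_\alpha(q)$, a convex combination of positive forms, hence positive whenever $q\notin S$; the cutoff error terms are $O(\dist(\cdot,M))$ and $O(\dist(\cdot,M)^2)$ and so are absorbed once $U$ is thin enough (away from fixed small balls about $S$; inside those balls one uses instead that the error terms vanish on $M$ while the Levi form of $\rho_p$ stays positive off $p$). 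Thus $\rho$ is strictly plurisubharmonic on $U\setminus S$.

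For part (3), if $M$ is polynomially convex, then \cite[Theorem 1.3.8]{St07} gives a smooth $\sigma\geq 0$ on $\C^{3k-1}$, strictly plurisubharmonic off $M$, with $\sigma^{-1}(0)=M$. With $\rho$ and $U$ from part (1), a thinner tube $U'\Subset U$, a cutoff $\chi\equiv 1$ on $U'$ and supported in $U$, and a large constant $C$, I would check that $\chi\rho+C\sigma$ is nonnegative, vanishes exactly on $M$, and is strictly plurisubharmonic on $\C^{3k-1}\setminus S$: it equals $\rho+C\sigma$ on $U'\setminus S$, equals $C\sigma$ off $U$, and on the compact set $\overline{U\setminus U'}\subset\C^{3k-1}\setminus M$ the strict positivity of $C\sigma$ (for $C$ large) dominates the bounded complex Hessian of $\chi\rho$.

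For part (2), if $M$ is rationally convex, I would adapt the argument of Duval-Sibony \cite{DuSi95}. The goal is a closed real $(1,1)$-form $\omega$ on $\C^{3k-1}$ that is positive semidefinite everywhere, positive definite on $\C^{3k-1}\setminus S$, and satisfies $\iota^*\omega=0$; since $\C^{3k-1}$ is contractible, $\omega=dd^c\varphi$ for a smooth $\varphi$, which is then the required potential, plurisubharmonic everywhere and strictly so off $S$. Locally near $M$ one has the isotropic positive forms $dd^c(\cdot)$ coming from the totally real local potentials and from the $\varphi_p$ near each $p\in S$; rational convexity of $M$ is used, exactly as in \cite{DuSi95} (whose argument passes through positive closed currents), to assemble these into such a global $\omega$, which in the end is a genuine smooth form since $M$ is totally real off the finite set $S$ where the model forms have been matched in. The main obstacle is precisely this assembly step: producing a single global closed positive form from the local isotropic ones is already the crux of Duval-Sibony's theorem in the totally real case, and here one must verify in addition that the Beloshapka-Coffman model forms near $S$ fit into that construction and that the resulting degeneracy stays confined to the finite set $S$. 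A secondary technical point, already visible in part (1), is making the patching estimates uniform up to the CR-singularities, which the thinness of the tube $U$ is designed to handle.
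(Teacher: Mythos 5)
Your treatment of parts (1) and (3) follows the paper's strategy essentially verbatim: patch the squared-distance model on the totally real part with a local nonnegative p.s.h.\ function near each generic CR-singularity that is strictly p.s.h.\ off the singular point (the paper's Proposition~\ref{prop_Slapar}, obtained from the Beloshapka--Coffman normal form and a Slapar-style construction), then for (3) add $C\sigma$ with $\sigma$ the global defining function for a polynomially convex compact set and a cutoff. One small simplification you could have noted: the separate ``(ii)'' local model you posit for part (2) is redundant -- any smooth $\rho_p\ge 0$ with $\rho_p^{-1}(0)=M$ locally automatically has $d\rho_p|_M=0$, hence $\iota^*dd^c\rho_p=d(\iota^* d^c\rho_p)=0$, so the function from (i) already pulls back to zero along $M$. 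The paper also composes the cutoffs with the nearest-point projection $\pi$ to $M$, which makes the cancellation of cross-terms in $dd^c\rho$ along $M$ cleaner, but your direct estimate of the error terms in a thin tube is a legitimate alternative.

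Part (2) is where there is a genuine gap, and you flag it yourself: you propose to rebuild a global closed positive $(1,1)$-form $\omega$ with $\iota^*\omega=0$ by ``adapting the argument of Duval--Sibony,'' but then concede that this assembly step is exactly the crux of their theorem and that you don't see how to carry it out in the presence of the CR-singularities. The paper avoids this entirely. It invokes Duval--Sibony's characterization of rational convexity (\cite[Remark~2.2]{DuSi95}) as a black box applied to the compact set $M$ -- this does not require $M$ to be totally real -- to obtain a smooth p.s.h.\ function $\theta$ on $\C^{3k-1}$ with $dd^c\theta$ vanishing at every point of $M$ and strictly positive off $M$. One then sets $\varphi=C\theta+\chi\rho$ with $\rho$ and $U$ from part (1), $\chi$ a cutoff that is $1$ near $M$ and supported in $U$, and $C$ large. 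On $M\setminus S$ one has $dd^c\varphi=dd^c\rho>0$; off $M$ the term $C\,dd^c\theta$ dominates; and $\iota^*dd^c\varphi=C\,\iota^*dd^c\theta+\iota^*dd^c\rho=0$. In other words, part (2) reduces to the same cutoff-plus-large-constant gluing as parts (1) and (3) once one takes Duval--Sibony's result, rather than its proof technique, off the shelf. Replacing your proposed re-derivation with this observation closes the gap.
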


Statement $(2)$ above yields a variation of the Gromov-Lees theorem~\cite{AuLaPo94}, which in turn is an application of
Gromov's $h$-principle. The Gromov-Lees theorem says that a compact $n$-dimensional manifold $M$ admits a 
Lagrangian immersion into $(\Cn,\om\std)$ if and only if its complexified tangent 
bundle is trivializable. This is the same topological condition that completely characterizes the totally real immersability of a manifold 
$M$ in $\Cn$ (see~\cite[Prop. 9.1.4]{Fo11}). Subcritical versions of these results imply that any compact $m$-dimensional manifold
admits an isotropic embedding into $(\Cn,\om\std)$ for $n\ge \floor{\frac{3m}{2}}$. Furthermore, there exist $m$-dimensional 
manifolds that do not admit such embeddings when $n< \floor{\frac{3m}{2}}$; see~\cite{GuSh17} for details. Despite this fact, 
our result shows that if $m$ is even, any $m$-dimensional $M$ can be embedded as an isotropic submanifold in $\C^{\floor{\frac{3m}{2}}-1}$ with respect to some {\it degenerate} K\"ahler form.  The
proof does not however rely on the $h$-principle.  Instead, we use a characterization of rational convexity established in Duval-Sibony~\cite{DuSi95}.

\noindent {\bf Acknowledgments.} We would like to thank Alexander Izzo for his helpful comments on an earlier version of this paper. In particular, he observed the relavance of our approach to the question of hulls with no analytic disks, which is now addressed in Proposition~\ref{thm_anstrhull}.
\section{Background material}\label{sec_backmat}
The reader can refer to this section for the notation, terminology and definitions used in this paper. We begin with some notation. 

\begin{itemize} 
\item $D_z(r)$ and $\bar D_z(r)$ denote the open and closed disks, respectively, of radius $r$ centred at $z$ in $\C$. 
\item $B_p(r)$ and $\bar B_p(r)$ denote the open and closed Euclidean balls, respectively, of radius $r$ centred at $p$ in $\Cn$, $n>1$. 
\item $O$ denotes the origin in $\C^n$ (the `$n$' will be clear from the context).  
\item $Z=(z,w_1,...,w_{2k-2},\zeta_1,...,\zeta_k)$ denotes the complex coordinates in $\C^{3k-1}$, where  
		\begin{align*}
		&z=x+iy,\\
		&w_\tau=u_\tau+iv_\tau,\quad 1\leq \tau\leq 2k-2,\\
		&\zeta_\sigma=\xi_\sigma+i\eta_\sigma,\quad  1\leq \sigma\leq k ,
	\end{align*}
is the decomposition of the coordinates into their real and imaginary parts. 
\item $Z'=(z,w_1,...,w_{2k-2},w)$ denotes the complex coordinates in $	\C^{2k}$. 
\item $\xi^*$ is the conjugate transpose of the vector $\xi\in\Cn$ (viewed as a  matrix).
\item $\JacC f(Z)$ denotes the complex Jacobian at $Z$ of the map $f:\C^{3k-1}\rightarrow\C^m$.
\item $\Chess f(Z)$ denotes the complex Hessian of $f:\C^{3k-1}\rightarrow\rl$ at $Z$.
\item For any compact set $X\subset\Cn$, $\cont(X)$ is the algebra of complex-valued continuous functions on $X$, and $\mathcal{P}(X)$ is the closure in $\cont(X)$ of the subalgebra generated by all the holomorphic polynomials restricted to $X$. 
\end{itemize} 

A necessary condition for a set $X\subset\Cn$ to satisfy $\mathcal{P}(X)=\cont(X)$ is that it must  coincide with its {\em polynomially convex hull}
	\bes
		\widehat X:=\left\{x\in\Cn:|P(x)|\leq \sup_{z\in X}|P(z)|, \text{ for all polynomials $P$ in}\ \Cn\right\}.
	\ees
If $X=\widehat X$, we say that $X$ is {\it polynomially convex}. If we replace polynomials in the above definition by rational functions in $\Cn$ with no poles on $X$, then we obtain the related notions of {\em rationally convex hulls} and {\em rational convexity}. A sufficient condition for a polynomially convex submanifold $M\subset\Cn$ to satisfy $\cont(M)=\mathcal P(M)$ is that $M$ be {\it totally real}, i.e., $T_pM\cap iT_p(M)=\{0\}$ for all $p\in M$, where $T_pM$ denotes the real tangent space of $M$ at $p$. Thus, $\cont(M)=\mathcal{P}(M)$ if $M$ is a totally real and polynomially convex submanifold of $\Cn$. 

As discussed earlier, it is not always possible to arrange $M\subset \Cn$ to be totally real everywhere. Given a point $p\in M$, let $H_pM$ denote the maximal complex-linear subspace of $T_pM$. A point $p\in M$ is called a {\em CR-singularity of $M$} if $\dim_\C(H_pM)\geq 1$. As a consequence of Thom's transversality theorem (see, e.g.,~\cite{GoGu73}), the set $S$ of CR-singularities of a generically embedded $M\subset\Cn$ is either empty or is a smooth submanifold of codimension $2(n-m)+2$ in $M$, see Domrin~\cite{Do95} for more details. 
Since $M$ is always locally polynomially convex near its totally real points, we must study the convexity properties of $M$ near $S$. The situation is nontrivial even when $S$ is a discrete set, i.e., when $m=2k$ and $n=3k-1$.      

When $k=1$ (or $m=n=2$), the only possible CR-singularities are complex points. These were classified by Bishop in \cite{Bi65} as follows. Given an isolated nondegenerate complex point $p$ of a surface $M$, one can find local holomorphic coordinates in which $M$ can be written as
	\bes
		w=\begin{cases}
				\frac{\alpha}{2} z\zbar+\frac{1}{4}(z^2+\zbar^2)+o(|z|^2),& \ \text{if}\ 0\leq \alpha<\infty,\, \\
				z\zbar+o(|z|^2),&\ \text{if}\ \alpha=\infty.
			\end{cases}
	\ees
Depending on whether $\alpha\in[0,1)$, $\alpha=1$ or $\alpha\in[1,\infty]$, $p$ is said to be a hyperbolic, parabolic or elliptic complex point, respectively. Parabolic points are not generic, and have varying local convexity properties (see \cite{Wi95} and \cite{Jo97}). Although, elliptic and hyperbolic points are both stable under small $\cont^2$-deformations, a surface is locally polynomially convex only near its hyperbolic complex points. In \cite{Sl04}, Slapar proves a (possibly stronger) result for {\em flat} hyperbolic points (\cite{FoSt91}), i.e., when local holomorphic coordinates can be chosen so that $\ima o(|z|^2)\equiv 0$, i.e., $M$ is locally contained in $\C\times\rl$. It is shown that, near a flat hyperbolic $p$, $M$ is the zero set of a nonnegative function that is strictly p.s.h. in its domain except at $p$. 
 
The case $k>1$ (i.e., $m=2k$ and $n=3k-1$) is qualitatively different because of higher codimension ($m<n$). Here, stable CR-singularities do not show diverse behaviour in this regard. In fact, it suffices to understand one special model to answer this question. We call this model the {\em Beloshapka-Coffman normal form} and it is given by the manifold
	\bea\label{eq_BCform}
		\bc_k:=
\left\{Z\in\C^{3k-1}:
\begin{aligned}
&v_\tau=0,\quad 1\leq \tau\leq 2k-2,\\
&\zeta_{1}=|z|^2+\zbar(u_1+iu_2), \\
&\zeta_\sigma=\zbar(u_{2\sigma-1}+iu_{2\sigma}),\quad 2\leq \sigma\leq k-1, \\
&\zeta_k=\zbar^2
\end{aligned}
\right\}.
	\eea
Note that $\dim \bc_k=2k$ and it has an isolated CR-singularity at the origin. In~\cite{Be97} and \cite{Co97}, Beloshapka ($k=2$) and Coffman ($k\geq 2$) showed that a nondegenerate CR-singularity $p$ of a $2k$-dimensional submanifold $M$ of $\C^{3k-1}$ is locally {\em formally} equivalent to $\bc_k$ at the origin. The nondegeneracy conditions appearing in their work are the full-rank conditions on matrices involving the second-order derivatives of the graphing functions of $M$ at $p$. Any isolated CR-singular point can, thus, be made nondegenerate with the help of a small $\cont^\ell$-perturbation, $\ell\geq 2$.  In~\cite{Co06}, Coffman further proved that if $M$ is also real analytic in a neighbourhood of $p$, then there is a local normalizing transformation that is given by a convergent power series. Since any smooth $M$ near a nondegenerate CR-singularity $p$ can be made real analytic after a small $\cont^\ell$-perturbation, we will only concern ourselves with real analytic nondegenerate CR-singularities. These will be referred to as {\em generic CR-singularities} in this paper. We rely on the fact that any $M$ at a generic CR-singularity $p$ is locally biholomorphic to $\bc_k$ at $O$. In Section \ref{sec_deffn}, we show more: near $O$, $\bc_k$ is the zero set of a nonnegative function that is strictly p.s.h. everywhere except at $O$. This shows that generic CR-singularities of $2k$-manifolds in $\C^{3k-1}$ behave like flat hyperbolic complex points.

We now note (and prove) a well-known fact that will be used multiple times in this paper. 

\begin{lemma}\label{lem_union}
	Let $K\subset \Cn$ be a polynomially convex compact set and $p_1,...,p_\ell\in\Cn\setminus K$. Then, there exist $r_1,...,r_\ell>0$, so that
		\bes		
			K\cup\bigcup_{j=1}^\ell\bar B_{p_j}(r_j')
		\ees
	is polynomially convex for all $r_j'\leq r_j$, $j=1,...,\ell$.
\end{lemma}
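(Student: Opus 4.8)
The plan is to argue by induction on $\ell$. For the base case $\ell=1$, write $p:=p_1$; the assertion is essentially a special case of Kallin's lemma (cf.\ \cite{St07}), and I would prove it directly as follows. Since $\widehat K=K$ and $p\notin K$, there is a polynomial $Q$ with $Q(p)=1$ and $\sup_K|Q|<1$; replacing $Q$ by a sufficiently high power, we may assume $\sup_K|Q|<\tfrac12$. As $Q$ is continuous at $p$, choose $r_1>0$ so that $Q(\bar B_p(r))\subset\{w\in\C:|w-1|\le\tfrac14\}$ for all $r\le r_1$. Fix $r\le r_1$ and set $X:=K\cup\bar B_p(r)$. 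Then $Q(K)$ and $Q(\bar B_p(r))$ lie in the disjoint closed disks $\{|w|\le\tfrac12\}$ and $\{|w-1|\le\tfrac14\}$, whose union is polynomially convex in $\C$; hence, using $Q(\widehat X)\subset\widehat{Q(X)}$, the hull $\widehat X$ is contained in $Q^{-1}(\{|w|\le\tfrac12\})\sqcup Q^{-1}(\{|w-1|\le\tfrac14\})$ and splits as a disjoint union of compacta $\widehat X=F_1\sqcup F_2$ with $K\subset F_1$ and $\bar B_p(r)\subset F_2$. Now apply the Oka--Weil theorem to the (polynomially convex) compact $\widehat X$: for any $\varepsilon>0$ there is a polynomial $g$ with $|g-1|<\varepsilon$ on $F_1$ and $|g|<\varepsilon$ on $F_2$. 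For $x\in F_1$ and an arbitrary polynomial $R$, applying the defining inequality of $\widehat X$ to $Rg$ and letting $\varepsilon\to0$ gives $|R(x)|\le\sup_K|R|$, so $x\in\widehat K=K$; hence $F_1=K$, and symmetrically $F_2=\bar B_p(r)$. Therefore $\widehat X=X$, and the argument is valid for every $r\le r_1$.

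For the inductive step I may assume $p_1,\dots,p_\ell$ are distinct (coinciding points are merged, keeping the smaller radius). Applying the case $\ell=1$ to $(K,p_1)$ and shrinking the radius further, I obtain $r_1>0$ so that $K_1:=K\cup\bar B_{p_1}(r_1)$ is polynomially convex and $p_2,\dots,p_\ell\notin K_1$. The inductive hypothesis applied to $K_1$ and $p_2,\dots,p_\ell$ then yields $r_2,\dots,r_\ell>0$ such that $K_1\cup\bigcup_{j=2}^\ell\bar B_{p_j}(r_j')$ is polynomially convex for all $r_j'\le r_j$. These $r_1,\dots,r_\ell$ work for $K$ itself: for any $r_1'\le r_1$ the set $K_1':=K\cup\bar B_{p_1}(r_1')$ is polynomially convex (base case) and contained in $K_1$, so a separating polynomial chosen for $(K_1,p_j)$ still separates $(K_1',p_j)$, and the radius thresholds appearing in the base case depend only on such a polynomial and the point $p_j$, not on $K_1$; hence the base-case step may be iterated with $K_1$ replaced by $K_1'$ to conclude that $K\cup\bigcup_{j=1}^\ell\bar B_{p_j}(r_j')$ is polynomially convex for all $r_j'\le r_j$.

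The routine-but-essential technical ingredient is the splitting of $\widehat X$ via the Oka--Weil theorem; the genuinely delicate part is the bookkeeping behind the clause ``for all $r_j'\le r_j$''. Polynomial convexity is not inherited by subsets, so one cannot deduce it for the smaller unions simply by observing that they sit inside the polynomially convex union with maximal radii. The resolution is to fix, at each stage of the induction, the separating data relative to the \emph{largest} admissible union, and to observe that this data continues to do its job after the radii are decreased; this is exactly what makes the iteration go through.
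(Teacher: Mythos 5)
Your proof is correct and takes essentially the same route as the paper: induction on $\ell$, with the single-ball step resolved by a Kallin-type separation argument. The differences are presentational rather than substantive: you re-prove the needed special case of Kallin's lemma from scratch via the Oka--Weil/Shilov-idempotent splitting $\widehat X=F_1\sqcup F_2$ where the paper simply cites it, and you spell out the uniformity-in-radii bookkeeping (fixing the separating polynomial and its threshold radius relative to the maximal union so that they continue to serve for all sub-unions), a point the paper conveys more tersely by ``repeating the proof for the case $\ell=1$.''
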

\begin{proof} We prove the claim by induction on $\ell$. Suppose $\ell=1$. Since $K$ is polynomially convex and $p_1\notin K$, there is a polynomial $Q:\Cn\rightarrow\C$ so that $|Q(p_1)|>\sup_{K}|Q|$. Thus, we may choose $s>\sup_K|Q|$ and $t>0$ such that $\bar D_0(s)$ and $\bar D_{Q(p_1)}(t)$ are disjoint in $\C$. Let $r_1>0$ be small enough so that $\bar B_{p_1}(r_1)\subset Q^{-1}(D_{Q(p)}(t))$. Then, since $Q(K)$ and $Q\left(\bar B_{p_1}(r_1')\right)$ lie in disjoint disks in $\C$ for all $r_1'\leq r_1$, by Kallin's lemma (see\cite{St07}), $K\cup \bar B_{p_1}(r_1')$ is polynomially convex for all $r_1'\leq r_1$. Now, suppose the claim holds for $\ell=m-1$, and let $p_1,...,p_m\notin K$. The induction hypothesis gives $r_1,...,r_{m-1}>0$ so that for any $r_j'\leq r_j$, $j=1,...,m-1$, $K'=K\cup\bar B_{p_1}(r_1')\cup\cdots\cup\bar B_{p_{m-1}}(r_{m-1}')$ is polynomially convex. We may shrink the $r_j$'s to ensure that $p_m\notin K'$. Now, repeating the proof for the case $\ell=1$ with $K=K'$ and $p_1=p_\ell$, we obtain the claim for $\ell=m$, and hence for all $\ell\in\N$. 
 \end{proof}

\begin{rmk} The above proof actually gives a stronger conclusion:   there exist $r_1,...,r_\ell>0$, so that $K\cup M_1\cup\cdots \cup M_\ell$ is polynomially convex, for any polynomially convex compacts $M_j\subset\bar B_{p_j}(r_j)$, $j=1,...,\ell$.
\end{rmk}

\section{Proof of Theorem \ref{thm_main} and Corollary \ref{cor_dens}}
\label{sec_thm1}	

We first begin with a result on the local polynomial convexity of the Beloshapka-Coffmam normal form, which is of independent interest. 

\begin{lemma}\label{lem_BC} The manifold $\bc_k\in\C^{3k-1}$ in \eqref{eq_BCform} is locally polynomially convex at $O$. 
\end{lemma}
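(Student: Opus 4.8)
The plan is to exhibit a polynomial (or entire) map that separates points of $\bc_k$ near $O$ well enough to apply a Kallin-type argument, or more directly to find a nonnegative plurisubharmonic function vanishing exactly on $\bc_k$ near $O$. Since the paper announces in Section~\ref{sec_deffn} that near $O$ the form $\bc_k$ is the zero set of a nonnegative function strictly p.s.h.\ away from $O$, one clean route is: first produce such a local defining function $\rho$, and then invoke the standard fact that a compact set which is the zero locus of a function that is p.s.h.\ on a neighborhood and strictly p.s.h.\ off the set (together with mild regularity) is locally polynomially convex. However, to keep Lemma~\ref{lem_BC} self-contained and logically prior, I would instead argue directly using the explicit defining equations in \eqref{eq_BCform}.

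Here is the concrete approach. On $\bc_k$ we have $v_\tau=0$ for all $\tau$, so $w_\tau=u_\tau\in\rl$; also $\zeta_k=\zbar^2$, $\zeta_1=|z|^2+\zbar(u_1+iu_2)$, and $\zeta_\sigma=\zbar(u_{2\sigma-1}+iu_{2\sigma})$ for $2\le\sigma\le k-1$. The key observation is that on $\bc_k$ the coordinate $\zbar$ is, up to lower-order ambiguity, determined: $\zeta_k=\zbar^2$, so $|\zeta_k|=|z|^2$, and once $z\ne 0$ the values $\bar z$ and the real numbers $u_1,\dots,u_{2k-2}$ recover all the $\zeta_\sigma$. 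I would build the test function
\[
\varphi(Z)=|z|^2-\operatorname{Re}\big(\text{(polynomial in }Z\text{)}\big)
\]
designed so that $\varphi\le 0$ on $\bc_k$ with equality characterizing the set, exploiting that $\zeta_k=\zbar^2$ forces $\operatorname{Re}(z^2\bar\zeta_k)=|z|^4$ and similar algebraic identities. Concretely: consider $g_k(Z)=\zeta_k - \zbar^2$ is not holomorphic; instead use that $\zeta_k\overline{z}{}^{\,0}$ won't do, so one works with the plurisubharmonic function $|z|^2-\operatorname{Re}(\overline{?})$—the honest way is to take $\psi(Z)=\sum_\tau v_\tau^2+\big|\zeta_k-\text{hol}\big|^2+\cdots$, which is not p.s.h. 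So the correct tool is the observation that $\bc_k$ lies in the real-analytic variety cut out by polynomial equations of the form $\{ \zeta_\sigma \bar z = \text{(antiholomorphic)}, \ \zeta_k=\bar z^2,\ v_\tau=0\}$, and to linearize: after the substitution $z\mapsto z$, the defining relations show $\bc_k$ is a graph over the totally real subspace $\{v_\tau=0,\ \operatorname{Im}\zeta_\sigma\text{-combinations}\}$ outside $z=0$.

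The cleanest rigorous path, and the one I would actually carry out: show $\bc_k$ near $O$ is contained in a totally real (singular only at $O$) set to which one can apply local polynomial convexity of graphs, by constructing an explicit family of holomorphic polynomials $P_t$ and using Kallin's lemma inductively (as in Lemma~\ref{lem_union}), OR—most promising—verify directly that $u:=-|z|^2+\sum_{\sigma}\lambda_\sigma\operatorname{Re}(c_\sigma(Z))$ with suitable holomorphic $c_\sigma$ and constants $\lambda_\sigma$ is plurisubharmonic, is $\le 0$ on $\bc_k$, and $=0$ only on $\bc_k$; then $\{u=0\}\supset\bc_k$ locally and an exhausting argument with $|Z|^2-u$ gives local polynomial convexity via the maximum principle on analytic disks.

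\textbf{Main obstacle.} The real difficulty is finding the right plurisubharmonic test function: the equations for $\zeta_1,\dots,\zeta_{k-1}$ are \emph{mixed} (they involve $\zbar$ times a holomorphic coordinate $u_\sigma=\operatorname{Re}w_\sigma$), so naive sums of squares of the defining functions fail to be p.s.h., and the single equation $\zeta_k=\zbar^2$ is what makes the CR-singularity at $O$ genuinely two-dimensional rather than flat. I expect the construction to hinge on combining $|z|^2$ (coming from the $\zeta_1$ equation) with $\operatorname{Re}(z\bar\zeta_k)$-type terms (coming from $\zeta_k=\zbar^2$) with carefully tuned coefficients so that the Levi form, which picks up a negative contribution from $-|z|^2$ only in the $z$-direction, is dominated by the positive contribution from the $w_\tau$- and $\zeta_\sigma$-directions; handling the borderline behavior as $z\to 0$ (where several terms vanish simultaneously) is the delicate point, and it is presumably why the hypothesis $k>1$ enters—there are enough extra holomorphic directions $w_\tau,\zeta_\sigma$ to absorb the defect, unlike in the surface case $k=1$.
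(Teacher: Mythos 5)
Your proposal never commits to a single argument and, more importantly, the route you flag as ``most promising'' cannot work as written: a function of the form $u=-|z|^2+\sum_\sigma\lambda_\sigma\operatorname{Re}(c_\sigma(Z))$ with each $c_\sigma$ holomorphic has complex Hessian $\operatorname{diag}(-1,0,\dots,0)$, since $\operatorname{Re}(c_\sigma)$ is pluriharmonic and contributes nothing while $-|z|^2$ contributes $-1$ in the $(z,\zbar)$ entry; so $u$ is plurisuperharmonic in the $z$-direction regardless of the choice of $\lambda_\sigma$ and $c_\sigma$ and cannot be plurisubharmonic. Your other sketches (Kallin-style induction, sums of squares of the graphing functions) are abandoned in the proposal itself, and your closing ``Main obstacle'' paragraph concedes that the central difficulty--finding a workable p.s.h.\ test function--remains unresolved. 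So there is a genuine gap: no completed proof is offered.

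The paper's actual proof is much shorter and sidesteps the Levi-form difficulty you correctly identify. The key observation is that on $\bc_k$ the coordinates $w_1,\dots,w_{2k-2}$ are real, so $G=(w_1,\dots,w_{2k-2})$ restricts to a real-valued map on $\bc_k\cap\bar B_O(r)$ whose components are polynomials, hence lie in $\mathcal{P}(\bc_k\cap\bar B_O(r))$. By the iterated slicing criterion (Theorem 1.2.16 of \cite{St07}), polynomial convexity of $\bc_k\cap\bar B_O(r)$ reduces to polynomial convexity of each fiber $G^{-1}(\bt)$, $\bt\in\rl^{2k-2}$. But on any fiber the $u_\tau$ are constants, so each fiber is a graph over a disk in the $z$-plane, and one of the graph coordinates is $\zeta_k=\zbar^2$. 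By \cite{MI76} the uniform algebra generated on $\clD_\eps$ by $z$ and $\zbar^2$ is all of $\cont(\clD_\eps)$; hence $\mathcal{P}$ of the fiber is all of $\cont$ of the fiber, which forces the fiber to be polynomially convex. This replaces the hunt for a p.s.h.\ defining function with a one-variable uniform-approximation fact. The strictly p.s.h.\ defining function you gesture at is indeed constructed later (Proposition~\ref{prop_Slapar} plus the appendix), but that is a considerably longer and independent construction, not the proof of this lemma.
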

\begin{proof} We recall the following criterion (an iterated version of Theorem 1.2.16 from~\cite{St07}). {\em If $X\subset\Cn$ is a compact subset and if $G:X\rightarrow \rl^m$ is a map whose components are in $\mathcal{P}(X)$, then $X$ is polynomially convex if and only if $G^{-1}(\bt)$ is polynomially convex for each $\bt\in\rl^m$.} Now, choose the restriction to $\bc_k$ of $G:\C^{3k-1}\rightarrow\C^{2k-2}$ that maps $Z\mapsto(w_1,..,w_{2k-2})$. Then, since the subalgebra generated by $z$ and $\zbar^2$ in $\cont(\clD_\eps)$ coincides with $\cont(\clD_\eps)$ (see~\cite{MI76}), we have that every fibre of $G$ is polynomially convex. Hence, by the criterion stated above, $\bc_k$ is locally polynomially convex at $O$. 
\end{proof}

\noindent {\em Proof of Theorem~\ref{thm_main}}.
Let $\iota:M\hookrightarrow\C^{3k-1}$ be the inclusion map of a smooth $2k$-dimensional submanifold $M\subset\C^{3k-1}$. Fix $s\geq 2$. By Thom's Transversality Theorem, there exists a $\cont^s$-small perturbation $j$ of $\iota$ such that $j(M)$ is smooth and totally real except at a finite number of CR-singular points (see \cite[Section~1]{Do95} for details). Without loss of generality, we may further assume that $j(M)$ has generic CR-singular points (see end of Section~\ref{sec_backmat}). Let $p_1,...,p_\ell$ denote the CR-singularities of $j(M)$. Since, for each $j$, $(M,p_j)$ and $(\bc_k,O)$ are locally biholomorphic, Lemma~\ref{lem_BC} shows that small enough neighbourhoods of $p_j$ in $M$ are polynomially convex.  Applying Lemma~\ref{lem_union} and the subsequent remark to $p_1,...,p_\ell$ (and $K=\emptyset$), we obtain opens sets $W_1,...,W_\ell\subset M$ containing $p_1,...,p_\ell$, respectively, so that the closure of $W=W_1\cup\cdots\cup W_\ell$ is polynomially convex and $j(M)\setminus W$ is a compact submanifold of $\C^{3k-1}$ with boundary. Since, $j(M)\setminus W$ is totally real, we can now apply the following result due to Arosio-Wold (see~\cite[Theorem~1.4]{ArWo17}). {\em Let $N$ be a compact smooth manifold (possibly with boundary) of dimension $d<n$ and let $f:N\rightarrow\Cn$ be a totally real $\cont^\infty$-embedding. Let $K\subset\Cn$ be a compact polynomially convex set. Then for all $s\geq 1$ and for all $\eps>0$, there exists a totally real $\cont^\infty$-embedding $f_\eps:N\rightarrow\Cn$ such that
	\begin{enumerate}
		\item $||f-f_\eps||_{\cont^s(N)}<\eps$ 
		\item $f_\eps=f$ near $f^{-1}(K)$, and
		\item $\widehat{K\cup f_\eps(N)}=K\cup f_\eps (N)$. 
	 \end{enumerate}} 
In our case, $N=M\setminus j^{-1}(W)$, $f=j|_N$ and $K=\overline W$. Let $\eps>0$ be arbitrary. Set $M'=f_\eps(M\setminus j^{-1}(W))\cup W$ to obtain a polynomially convex perturbation of $M$ that is totally real except at $p_1,...,p_\ell$.
\qed

\medskip

\noindent {\em Proof of Corollary~\ref{cor_dens}}.
Let $M$ be a compact $2k$-dimensional abstract manifold without boundary. By Theorem~\ref{thm_main}, there exists a $\cont^\infty$-smooth embedding $F=(f_1,...,f_{3k-1}):M\rightarrow\C^{3k-1}$ such that $F(M)$ is polynomially convex and totally real outside a finite set $S\subset F(M)$. For any compact set $X\subset\Cn$, we let 
	\bes
		\hol(X)=\{f|_X:f\ \text{is holomorphic in some open neighbourhood of}\ X\}.
	\ees
Note that $X:=F(M)$ and $X_0:=S$ satisfy the hypothesis of the following result due to O'Farrel-Preskenis-Walsch ((see~\cite{OFPrWa84}; also see~\cite{St07})). {\em Let $X$ be a compact holomorphically convex set in $\Cn$, and let $X_0$ be a closed subset of $X$ for which $X\setminus X_0$ is a totally real subset of the manifold $\Cn\setminus X_0$. A function $f\in\cont(X)$ can be approximated uniformly on $X$ by functions holomorphic on a neighbourhood of $X$ if and only if $f|_{X_0}$ can be approximated uniformly on $X_0$ by functions holomorphic on $X$.} 

Hence, $\overline{\hol(F(M))}=\cont(F(M))$. Further, by the Oka-Weil theorem for polynomially convex sets, we have that $\mathcal{P}(X)=\overline{\hol(F(M))}$. Thus, $\{P\circ F:P\ \text{is a holomorphic polynomial on}\ \C^{3k-1}\}$ is dense in $\cont(M)$. In other words, $\{f_1,...,f_{3k-1}\}$ is a PD-basis of $\cont(M)$. 

Now, if $M$ is a manifold with boundary, Theorem~\ref{thm_main} guarantees a smooth embedding $F:M\rightarrow\C^{3k-1}$ such that $F(M)$ is totally real and polynomially convex (see the comment following the statement of Theorem~\ref{thm_main}). We fix an $\ell\geq 0$, a $g\in\cont^\ell(M)$ and an arbitrary $\eps>0$. Let $\wt\eps=C\eps$, where $C$ is a constant to be determined later. Since $F(M)$ is totally real, a result due to Range-Siu (see Theorem 1 in \cite{RaSi74}; although not explicitly stated, the result therein works for compact manifolds with or without boundary) grants the existence of a neighbourhood $U$ of $F(M)$ and a $h\in\hol(U)$ such that
	\be\label{eq_c-h}
		||g-h||_{\cont^\ell(F(M))}<\wt\eps.
	\ee
Due to the polynomial convexity of $F(M)$, we can find a neighbourhood $V\Subset U$ of $F(M)$, such that $\overline V$ is polynomially convex. By the Oka-Weil approximation theorem, there is a polynomial $P$ on $\Cn$ such that  
	\be\label{eq_h-r_sup}
		||h-P||_{\cont(\overline V)}< \wt\eps.
	\ee
As $F(M)$ is compact, there is an $r>0$ such that $B_x(r)\subseteq V$ for all $x\in F(M)$. We fix an $x\in F(M)$. As $h-P\in\hol(B_x(r))$, we can combine Cauchy estimates and \eqref{eq_h-r_sup} to obtain
	\bea 
		\left|h^{(j)}(x)-P^{(j)}(x)\right|
			&\leq&\frac{j!}{r^j}\sup_{y\in B_x(r)}|h(y)-P(y)| 
			\leq \frac{j!}{r^j}\wt\eps,		\label{eq_h-r_ck}								
	\eea
for any $j\in\pnat$. So, we obtain from \eqref{eq_c-h} and \eqref{eq_h-r_ck} that
	\beas
		||g-P||_{\cont^\ell(F(M))}&\leq &||g-h||_{\cont^\ell(F(M))}+||h-P||_{\cont^\ell(F(M))}\\
		&=&||g-h||_{\cont^\ell(F(M))}+\sum_{j=0}^k ||h^{(j)}-P^{(j)}||_{\cont(F(M))}\\
		&<&\wt \eps\left(1+\sum_{j=0}^k \frac{j!}{r^j}\right)
		=C\eps\left(1+\sum_{j=0}^k \frac{j!}{r^j}\right).
	\eeas
Setting $C=\left(1+\sum_{j=0}^k \frac{j!}{r^j}\right)^{-1}$, we obtain that $	||g-P||_{\cont^\ell(M)}<\eps$. Since $\eps$ and $g$ were chosen arbitrarily, and $C$ is independent of $\eps$, we conclude that polynomials are dense in the space of $\cont^\ell$-smooth functions on $F(M)$ in the $\cont^\ell$-norm. Thus, the components of $F$ form a PD-basis of $\cont^\ell(M)$, for every $\ell\geq 0$.
 \qed

\begin{rmk} The following statement is implicit in the above proof. If we view a PD-basis of $M$ as a map from $M$ to $\C^{3k-1}$, then the set of all PD-bases of $\cont(M)$ is dense in $\cont(M;\C^{3k-1})$. This follows from the fact that smooth embeddings of $M$ into $\C^{3k-1}$ are dense in $\cont(M;\C^{3k-1})$ for $k>1$. 
\end{rmk}

\section{Proof of Proposition~\ref{thm_anstrhull}}
We first state a theorem due to Alexander which is used both by Izzo-Stout (in \cite{IzSt18}) and Arosio-Wold (in \cite{ArWo17}) in their respective constructions of hulls with no analytic structure. 

\begin{theorem}[Alexander, \cite{Al98}]  The standard torus $\mathbb{T}^2=\{(e^{i\theta},e^{i\psi}):\theta,\psi\in\rl\}$ in $\CC$ contains a compact subset $E$ such that $\widehat E\setminus E$ is nonempty but contains no analytic disk. Such a set can be chosen in any neighbourhood of the diagonal of $\mathbb{T}^2$.  
\end{theorem}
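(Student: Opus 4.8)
The statement is Alexander's theorem, and the plan is to reproduce his construction of a compact subset of $\mathbb{T}^2$ carrying a nontrivial polynomial hull with no analytic structure, arranged to sit near the diagonal. Since the rotations $(z,w)\mapsto(\lambda z,\lambda w)$, $|\lambda|=1$, preserve both $\mathbb{T}^2$ and the diagonal $\Delta=\{(z,z):|z|=1\}$, it suffices, for each $\eps>0$, to produce one compact $E\subset\mathbb{T}^2$ lying in the $\eps$-neighbourhood of $\Delta$ with $\widehat E\supsetneq E$ and with no nonconstant holomorphic map $\D\to\widehat E\setminus E$.

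I would realize $E$ as a decreasing intersection $E=\bigcap_n E_n$, where each $E_n$ is a compact neighbourhood in $\mathbb{T}^2$ of a finite union of closed curves obtained as $\cont^0$-small perturbations of the diagonal circle, the perturbations at successive stages being organized in a branching (tree-like) pattern so that the fibres of $E$ over $\mathbb{T}$ become totally disconnected Cantor-type sets rather than points. The nontriviality of the hull is obtained from a representing-measure argument: for a finite Blaschke product $b$ with $b(0)=0$ and any polynomial $P$, the map $\lambda\mapsto P(\lambda,b(\lambda))$ is holomorphic on $\clD$, so the push-forward of normalized arc length on $\mathbb{T}$ by $z\mapsto(z,b(z))$ is a Jensen measure, supported on that graph, representing the point $(0,0)$; arranging such a measure to be carried by the level sets (and ultimately by $E$ itself, without thereby inserting a smooth graph into the hull) and using $\max_{E_n}|P|\to\max_E|P|$ for decreasing compacta yields a point of $\widehat E$ off $\mathbb{T}^2$. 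One also checks $\widehat E\cap\mathbb{T}^2=E$, since every point of $\mathbb{T}^2$ is a peak point for the polynomials on $\clD^2$ (e.g. $(z,w)\mapsto\tfrac12(\overline{z_0}z+\overline{w_0}w)$ peaks at $(z_0,w_0)$), so $\widehat E\setminus E\subseteq\clD^2\setminus\mathbb{T}^2$, which is then nonempty.

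The real content, and the step I expect to be the main obstacle, is to tune the perturbation data and the tube radii $\de_n\downarrow0$ so that $\widehat E\setminus E$ contains no analytic disk. The mechanism is a rigidity argument: given a nonconstant holomorphic $\phi=(\phi_1,\phi_2):\D\to\widehat E\setminus E$, one has $\|\phi_j\|_\infty\le1$ because $\widehat E\subset\widehat{\mathbb{T}^2}=\clD^2$; clearing denominators produces, for each $n$, a polynomial vanishing on the level-$n$ curves and at most $\de_n$ in modulus on $E_n\supset E$, hence on $\widehat E$, so $\phi$ must stay uniformly $\de_n$-close to the union of the level-$n$ curves for every $n$ (and, when $\phi_1$ is nonconstant, $\phi_1$ maps $\D$ into $\D$ by the maximum principle; the case of constant $\phi_1$ is excluded because the fibres of $\widehat E$ over interior points are arranged to be totally disconnected). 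The parameters — super-exponentially small $\de_n$ together with a prescribed minimum separation between the branches created at each stage — must be chosen so that Vitali's theorem forces $\phi_1$, and hence $\phi$, to be constant. The delicate point is the tension driving the whole construction: any \emph{coherent} branch that is a graph of a holomorphic (Blaschke) function converges, since the branch displacements at successive stages telescope, and its limit — a continuous inner function — is a finite Blaschke product whose graph over $\clD$ lands inside $\widehat E$, an honest analytic disk; so the construction must produce a nontrivial hull while scrupulously avoiding any persistent holomorphic graph. Engineering this balance — the branching scheme, the choice of the cutting polynomials at each stage, and the interlocking of the size and separation parameters — is the technical heart of the argument, and is where one must follow \cite{Al98} in detail.
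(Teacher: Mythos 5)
The paper does not prove this statement at all: it is Alexander's theorem, imported verbatim from \cite{Al98} as a black box (the only input the authors need is the \emph{existence} of such an $E$ near the diagonal, which they then feed into Lemma \ref{lem_union} and the Arosio--Wold perturbation theorem). So there is no in-paper argument to compare yours against; the only question is whether your writeup stands on its own as a proof. It does not. You have produced an accurate and well-organized roadmap of Alexander's construction --- the decreasing intersection $E=\bigcap_n E_n$ with branching fibres, the Jensen/representing measures pushed forward from arc length along graphs of finite Blaschke products to certify $\widehat E\setminus E\neq\emptyset$, the peak-point observation giving $\widehat E\cap\mathbb{T}^2=E$ (your peaking function $\tfrac12(\overline{z_0}z+\overline{w_0}w)$ is correct), and the Vitali/normal-families rigidity scheme for excluding disks --- but the entire content of the theorem lives in the step you explicitly decline to carry out: the simultaneous choice of the branch separations and the radii $\de_n$ so that (i) the limit of the representing measures is still carried by $E$ and still represents a point off $\mathbb{T}^2$, yet (ii) no coherent holomorphic branch survives to the limit and every candidate disk is forced to be constant. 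You yourself flag this as ``where one must follow \cite{Al98} in detail,'' which is an honest admission that the proposal is a sketch, not a proof.

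Two smaller points. First, the tension you identify at the end (a coherent holomorphic branch would telescope to a finite Blaschke graph inside $\widehat E$) is real and is exactly why the parameters cannot be chosen naively, but stating the tension is not the same as resolving it; the resolution requires quantitative estimates you have not supplied. Second, your treatment of the case ``$\phi_1$ constant'' leans on the fibres of $\widehat E$ over interior points being totally disconnected, which is itself a nontrivial property of the construction that must be \emph{proved} from the parameter choices, not assumed. For the purposes of this paper none of this matters --- citing \cite{Al98} is the correct and intended move --- but as a self-contained proof of the stated theorem your proposal has a genuine gap at its core.
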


Let $A$ be a tubular neighbourhood of the diagonal in $\mathbb{T}^2$ and extend it to a smooth totally real $2k$-dimensional submanifold of $\C^{3k-1}$ as follows:
	\beas
			U=
\left\{Z\in\C^{3k-1}:
\begin{aligned}
&(z,w_1)\in A,\\
& |\rea(w_t)|<\eps,\ \ima(w_t)=0,\quad 2\leq t\leq 2k-2,\\
& |\rea(\zeta_1)|<\eps,\ \ima(\zeta_1)=0,\ \zt_2=\cdots=\zeta_k=0
\end{aligned}
\right\}.
	\eeas
Let $E$ denote an Alexander set in $A$. We abuse notation and denote $E\times\{0\}^{3k-3}\subset U$ by $E$. Since a generic embedding of $M$ into $\C^{3k-1}$ is totally real except for finitely many generic CR-singularities, we may consider a smooth copy of $M$ in $\C^{3k-1}$ (also denoted by $M$) that contains $U$ in a small $2k$-dimensional ball in its interior and has generic CR-singularities $p_1,...,p_\ell\in M\setminus U$ that are disjoint from $\widehat E$. Since $M$ is locally polynomially convex at $p_1,...,p_\ell$, Lemma \ref{lem_union} and the subsequent remark show that there exists a neighbourhood $W$ of the set $\{p_1,...,p_\ell\}$ in $M$ so that $\widehat E\cup \overline W$ is polynomially convex, and $M\setminus W$ is a totally real smooth submanifold with boundary. We now apply the Arosio-Wold perturbation result stated in Section \ref{sec_thm1} to $N=M\setminus W$, $K=\widehat E\cup\overline W$ and $f$, the inclusion map of $M\setminus W$, to obtain a smooth embedding of $M$ into $\C^{3k-1}$ whose image $\Sigma$ contains $E\cup W$, and $\widehat{\Sigma\cup \widehat E}= \Sigma\cup\widehat E$. Thus, $\widehat{\Sigma}=\widehat{\Sigma\cup E}= \Sigma\cup\widehat E$. Now, if $\widehat E$ was contained in $\Sigma$, then $\Sigma$ would be a polynomially convex manifold that is totally real except at generic CR-singularities. We have shown in the proof of Corollary \ref{cor_dens} that any subset $T$ of such a manifold has the property that $\mathcal{P}(T)=\cont(T)$, and thus is polynomially convex. This contradicts the fact that $ E\subset \Sigma$ is not polynomially convex. Thus, 
$\widehat\Sigma\setminus\Sigma$ is nonempty but contains no analytic disk. 

To show that $\widehat{\Sigma}=h_r(\Sigma)$, we use Izzo's argument from \cite[Section 3]{Iz18}. He shows that the set $E$ satisfies the generalized argument principle, i.e., if $p$ is a polynomial that has a continuous logarithm on $E$, then $0\notin p(E)$. Then, we use the following result due to Stolzenberg (\cite{St63}). {\em If $X\subseteq Y\subset\Cn$ are compact sets such that $X$ satisfies the generalized argument principle and the first {\v C}ech cohomology group $\check{\mathrm{H}}^1(Y,\mathbb{Z})$ vanishes, then $\widehat X\subset h_r(Y)$.} Since $E$ is contained in a (contractible) ball $Y$ in $\Sigma$, $\widehat E\subseteq h_r(Y)\subset h_r(\Sigma)$. So, $\widehat{\Sigma}= \Sigma\cup\widehat{E}\subseteq h_r(\Sigma)$. Thus, the two hulls coincide, as claimed. 
\section{Proof of Theorem~\ref{thm_deffn}}\label{sec_deffn}

The polynomial convexity established in Lemma~\ref{lem_BC} allows us to write $\bc_k$ near $O$ as the zero locus of some nonnegative p.s.h. function that is strictly p.s.h. away from $\bc_k$. In order to obtain Theorem~\ref{thm_deffn}, we need an improved version of this fact, which we establish in the following technical proposition.  
\begin{Prop}\label{prop_Slapar} Let $k>1$. For a given $r>0$ small enough, there exists a smooth p.s.h. function $\psi:\C^{3k-1}\rightarrow\rl$ such that 
\renewcommand{\theenumi}{\alph{enumi}}
	\begin{enumerate}
		\item $\{\psi=0\}=\bc_k\cap \bar B_O(r)$,
		\item $\psi>0$ on $\C^{3k-1}\setminus (\bc_k\cap \bar B_O(r))$, and
		\item $\psi$ is strictly p.s.h. on $\C^{3k-1}\setminus\{O\}$. 
	\end{enumerate}
\end{Prop}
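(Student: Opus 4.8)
The plan is to build $\psi$ by carefully combining the natural candidate building blocks coming from the defining functions of $\bc_k$ and the distance-squared function to the model, and then repairing the failure of strict plurisubharmonicity that necessarily occurs at $O$ (since $O$ is a genuine CR-singularity and $\bc_k$ is not totally real there). First I would write $\bc_k$ as the common zero set of the $2k$ real functions $\{v_\tau\}_{\tau=1}^{2k-2}$ together with the moduli $|\zeta_1-|z|^2-\zbar(u_1+iu_2)|^2$, $|\zeta_\sigma-\zbar(u_{2\sigma-1}+iu_{2\sigma})|^2$ for $2\le\sigma\le k-1$, and $|\zeta_k-\zbar^2|^2$; call their sum $\rho_0$. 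This $\rho_0$ vanishes exactly on $\bc_k$ and is nonnegative, but it is only plurisubharmonic after adding a large multiple of a strictly p.s.h. function, and the key point is to understand the degeneracy of its complex Hessian \emph{along} $\bc_k$, i.e., transverse to the manifold. Because $\bc_k$ is totally real away from $O$, the square-distance function $\dist^2(\cdot,\bc_k)$ is locally strictly p.s.h. in a punctured neighbourhood of $\bc_k\setminus\{O\}$; the difficulty is purely at $O$ and within a fixed small ball.

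Concretely, I would look for $\psi$ of the form
\begin{equation*}
\psi = A\Big(\sum_{\tau=1}^{2k-2} v_\tau^2 + \textstyle\sum_{\sigma} |\zeta_\sigma - (\text{model term})_\sigma|^2\Big) + B\,|z|^4 + C\,\chi,
\end{equation*}
where $\chi$ is a small strictly p.s.h. corrector supported away from a neighbourhood of $O$ (to push the Hessian to be strictly positive once we are off the CR-singularity without destroying (a) and (b)), and $A,B,C$ are constants, possibly with $A$ replaced by a function that grows near $\partial B_O(r)$ so as to force $\{\psi=0\}$ to be exactly $\bc_k\cap\bar B_O(r)$ rather than the larger set $\bc_k$. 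The role of the $|z|^4$ term is the crucial one: the complex Hessian of $|\zeta_k-\zbar^2|^2$ degenerates precisely in the $z$-direction at $O$ (the map $z\mapsto(z,\zbar^2)$ is the relevant polynomially convex model from Lemma~\ref{lem_BC}), and $dd^c|z|^4 = 4|z|^2\, dz\wedge d\bar z$ is positive semidefinite, vanishing only at $O$, which is exactly what is needed to make the total Hessian strictly positive on $\C^{3k-1}\setminus\{O\}$ while still vanishing (only to higher order) at $O$. One then checks that the cross terms in the Hessian of the modulus-squared expressions, which are $O(|Z|)$ near $O$, are dominated by the diagonal terms after choosing $A$ large and restricting to a small enough $r$.

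The main obstacle I expect is step three — arranging strict plurisubharmonicity on all of $\C^{3k-1}\setminus\{O\}$ simultaneously with conditions (a) and (b). On the manifold $\bc_k$ itself (away from $O$) one gets strict positivity from the total-reality of $\bc_k\setminus\{O\}$, i.e., from a $\dist^2$-type contribution; off the manifold one gets it from the large multiple $A$; but near $O$ both of these degenerate, and one must verify that the $z$-directional degeneracy is the \emph{only} one and that it is killed by $|z|^4$. This requires a direct computation of $\Chess\psi$ at and near $O$ in the Beloshapka--Coffman coordinates, organizing the quadratic terms so that the block corresponding to $(z, w_1,\dots,w_{2k-2},\zeta_1,\dots,\zeta_k)$ directions is manifestly positive definite off $O$. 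I would also need to glue the local picture to the outside of $\bar B_O(r)$: use a cutoff to transition from the above $\psi$ to a globally strictly p.s.h. function (e.g. $|Z|^2$) outside a slightly larger ball, absorbing the non-positive Hessian contributions of the cutoff by taking $A$ and $C$ large, which is routine given that $\psi$ is already bounded below by a positive constant there. Finally, rescaling $Z\mapsto Z/r$ reduces everything to a fixed small ball, so the smallness of $r$ is used only to control the error terms in the Hessian estimate.
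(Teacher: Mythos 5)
Your proposal has a genuine gap that breaks it at the central step: the term $B|z|^4$ is incompatible with property (a). The function $|z|^4$ does not vanish on $\bc_k$ --- it is strictly positive at every point of $\bc_k$ with $z\neq 0$ --- so $\psi = A\rho_0 + B|z|^4 + C\chi$ fails to vanish on $\bc_k\cap\bar B_O(r)$ and cannot serve as the desired defining function. This is not a small fix: the entire difficulty of the proposition is to find a correction that \emph{both} vanishes on $\bc_k$ (so (a) and (b) survive) \emph{and} repairs the $z$-directional degeneracy of the Hessian; $|z|^4$ does the latter only partially and ruins the former completely. A related factual error is the assertion that $dd^c|z|^4 = 4|z|^2\,dz\wedge d\bar z$ ``vanishes only at $O$''; it vanishes on the entire complex hyperplane $\{z=0\}\subset\C^{3k-1}$, which meets $\bc_k$ in a $(2k-2)$-dimensional set. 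So even if $|z|^4$ were replaced by a multiple that vanishes on $\bc_k$, this particular form would not restore strict plurisubharmonicity on a punctured ball.

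The paper's actual mechanism is worth contrasting with yours. The model correction is not $|z|^4$ but a carefully tuned homogeneous quartic $P_\alpha(x^2,y^2,u)$ in the Slapar coordinates on $\C^{2k}$, in which \emph{every monomial carries a factor of the transverse variable $u$} (and hence vanishes on the flat model $S_\alpha=\{u=v=v_j=0\}$), yet whose $\partial_z\partial_{\bar z}$ is made positive away from $\{x=y=0\}$ by a nontrivial system of inequalities (Lemma~\ref{lem_Slaparfn}, conditions (1)--(7), and Lemma~\ref{lem_posquad}). This is then pulled back to $\C^{3k-1}$ by a family of holomorphic maps $f_\alpha^\sigma$, a distance-squared term $g$ is added to kill the Jacobian kernel directions and cut the pullback zero set down to $\bc_k$, and one sums over $\sigma$ and over two values of $\alpha$ so that the remaining degeneracy loci $X_\alpha^\sigma$ intersect only at $O$. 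Your proposal, by contrast, treats the $z$-degeneracy as a purely local phenomenon fixable by a single diagonal term and attributes the rest to ``choosing $A$ large''; but scaling $\rho_0$ by $A$ multiplies the indefinite contributions $g_{Z_j\bar Z_l}\bar g + g\,\overline{g_{Z_l\bar Z_j}}$ (which are nonzero off $\bc_k$) by the same factor, so largeness of $A$ does not by itself force positivity. To salvage your approach you would at minimum need a replacement for $|z|^4$ that vanishes on $\bc_k$ identically and whose Levi form controls the $z$-direction off $\{z=0\}$ --- which is essentially what the Slapar polynomial and the $g + \sum_{\sigma,\alpha}\psi_\alpha^\sigma$ construction accomplish.
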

\begin{proof} We first construct a $\wt\psi$ that has all the desired properties of $\psi$ but is only defined locally  near $O\in\bc_k\subset\C^{3k-1}$. We work with an auxiliary family of $2k$-manifolds in $\C^{2k}$. Let $\alpha<1$. Set
	\bes
		S_\alpha=\left\{Z'\in\C^{2k}:
			\begin{aligned}
				&\ima(w_1)=\cdots=\ima(w_{2k-2})=0,\\
					&w=\frac{\alpha}{2}|z|^2+\frac{1}{4}(z^2+\zbar^2)
			\end{aligned}
		\right\}.
	\ees
Each slice $S_\alpha\cap\{Z'\in\C^{2k}:(w_1,...,w_{2k-2})=(s_1,...,s_{2k-2})\}$, where $(s_1,...,s_{2k-2})\in\rl^{2k-2}$, is a totally real surface with an isolated flat hyperbolic complex point at the origin in $\CC_{z,w}$. These have been studied by Slapar in~\cite{Sl04}. A slight modification of his construction yields the following key ingredient of our proof.  

\begin{lemma}\label{lem_Slaparfn}
For each $\alpha<0.46$, there is a neighbourhood $V_\alpha$ of the origin in $\C^{2k}$ and a smooth p.s.h. function $\rho_\alpha:V_\alpha\mapsto \rl$ such that
\renewcommand\labelitemi{$\boldsymbol *$}
	\begin{itemize}
		\item  $\{\rho_\alpha=0\}=S_\alpha\cap V_\alpha$,
		\item  $\rho_\alpha> 0$ on $V_\alpha\setminus S_\alpha$, and
		\item  $\rho_\alpha$ is strictly p.s.h. on $V_\alpha\setminus Y$, where 
				\be\label{eq_sing}
					Y:=\{Z'\in \C^{2k}:z=\ima w_1=\cdots=\ima w_{2k-2}=w=0\}.
				\ee
	\end{itemize}
\end{lemma}

We relegate the proof of this lemma to the appendix (see Section~\ref{sec_append}). To continue with the proof of Proposition~\ref{prop_Slapar}, we produce holomorphic maps that send the Beloshapka-Coffman normal form $\bc_k$ into $S_\alpha$. These allow us to pull back $\rho_\alpha$ to $\C^{3k-1}$ (locally near $O$) to give p.s.h. functions that vanish on $\bc_k$. For this, let $f_\alpha:\C^{3k-1}\rightarrow\C^{2k}$ be the map 
	\begin{align*}
		Z\mapsto \left(z+\tfrac{\alpha w_1}{\alpha+1}
				-\tfrac{i\alpha w_2}{1-\alpha}, w_1,...,w_{2k-2},
		\tfrac{\alpha}{2}\zeta_1+\tfrac{\zeta_k}{4}+\tfrac{z^2}{4}
		+\tfrac{\alpha}{2}z(w_1-iw_2)+\tfrac{\alpha^2 w_1^2}{2\alpha+2}-\tfrac{\alpha^2 w_2^2}{2-2\alpha}\right).
	\end{align*}
For $1\leq \sigma\leq {k-1}$, let $f^\sigma_\alpha:\C^{3k-1}\rightarrow\C^{2k}$ be the map given by
	\bes
		f^\sigma_\alpha=f_\alpha\circ F^\sigma,
	\ees
where $F^\sigma:\C^{3k-1}\rightarrow\C^{3k-1}$ is the automorphism
	
	\beas
\begin{pmatrix} z,w_1,\cdots, w_{2k-2}, \zeta_1, \cdots, \zeta_k\end{pmatrix}
	\longmapsto		
		\begin{pmatrix}z,
    		\dfrac{w_1+w_{2\sigma-1}}{2},
    		\dfrac{w_2+w_{2\sigma}}{2},
		 	w_3,
			\cdots,
			w_{2k-2},
			\dfrac{\zeta_1+\zeta_\sigma}{2},
			\zeta_2,
			\cdots,
			\zeta_k \end{pmatrix}.	
	\eeas
Each $f_\alpha^\sigma$ is holomorphic on $\C^{3k-1}$ and has the following properties.
	\begin{itemize}
		\item $(f_\alpha^\sigma)^{-1}(S_\alpha)=M_\alpha^\sigma$, where
					\begin{align*}
					\qquad\qquad	M_\alpha^\sigma=\left\{Z\in\C^{3k-1}:
							\begin{aligned}
								&\alpha\big[\zeta_1+\zeta_\sigma
									-\zbar(2z+w_1+w_{2\sigma-1}+i(w_2+w_{2\sigma}))\big]
										+\zeta_k-\zbar^2=0,\\
								&\ima w_1=\cdots=\ima w_{2k-2}=0
							\end{aligned}
						\right\}.
					\end{align*}
		\item $(f_\alpha^\sigma)^{-1}(Y)=X_\alpha^\sigma$, where
					\begin{align*}
			 			X_\alpha^\sigma=\left\{Z\in\C^{3k-1}:
							\begin{aligned}
								&z+\frac{\alpha (w_1+w_{2\sigma-1})}{2\alpha+2}
									-\frac{i\alpha (w_2+w_{2\sigma})}{2-2\alpha}=0,\\
								&\alpha(\zeta_1 +\zeta_\sigma)+\zeta_k+\zbar^2=0,\\
								&\ima w_1=\cdots=\ima w_{2k-2}=0
							\end{aligned}
						 \right\}.
					\end{align*}
		\item $\ker\JacC(f_\alpha^\sigma)(Z)=\left\{\left(
				\underbrace{0,...,0}_{2k-1},
					\zeta_1,...,\zeta_{k-1},-\alpha(\zeta_1+\zeta_\sigma)\right)
						:(\zeta_1,...,\zeta_{k-1})\in\C^{k-1}\right\}$.
	\end{itemize}

Next, let $\psi_\alpha^\sigma:=\rho_\alpha\circ f_\alpha^\sigma$ on $U_\alpha^\sigma:=(f_\alpha^\sigma)^{-1}(V_\alpha)$, where $\rho_\alpha$ and $V_\alpha$ are as in Lemma~\ref{lem_Slaparfn}. Then, owing to the properties of $\rho_\alpha$ and $f_\alpha^\sigma$, we have that $\psi_\alpha^\sigma$ is a p.s.h. function on $U_\alpha^\sigma$, satisfying the following properties (compare with the required properties (a)-(c)).
\renewcommand{\theenumi}{\alph{enumi}'}
	\begin{enumerate}
		\item $\{\psi_\alpha^\sigma=0\}=M_\alpha^\sigma\cap U_\alpha^\sigma$,
		\item $\psi_\alpha^\sigma>0$ on 
				$U_\alpha^\sigma\setminus M_\alpha^\sigma$, and 
		\item $ \xi^* \cdot 	\Chess\psi_\alpha^\sigma(Z) \cdot \xi>0$,
				 when $Z\in U_\alpha^\sigma\setminus X_\alpha^\sigma$
				 and $\xi\in\C^{3k-1}\setminus \ker\JacC(f_\alpha^\sigma)(Z)$.
	\end{enumerate} 
As $\bc_k\subsetneq M_\alpha^\sigma$, we need to `correct' $\psi_\alpha^\sigma$. For this, let 
	\bes
		g(Z)=|\zeta_k-\zbar^2|^2
			+\sum_{\sigma=2}^{k-1}|\zeta_\sigma
				-\zbar(\overline{w_{2\sigma-1}}+i\:\overline{w_{2\sigma}})|^2. 
	\ees
Since $M_\alpha^\sigma\cap g^{-1}(0)=\bc_k$, and $\xi^*\cdot \Chess g(Z)\cdot\xi>0$ for any $Z\in\C^{3k-1}$ and any nonzero $\xi\in \ker\JacC(f_\alpha^\sigma)(Z)$, we have that each $g+\psi_\alpha^\sigma$ is a p.s.h. function on $U_\alpha^\sigma$ satisfying properties $(a)$, $(b)$ and 
	\begin{itemize}
		\item [(c'')] $\psi_\alpha^\sigma+g$ is strictly p.s.h. on 
							$U_\alpha^\sigma\setminus X_\alpha^\sigma$.
	\end{itemize}
Finally, to obtain property (c), we observe that
 	\bes	
		\bigcap _{\sigma=1}^{k-1}\left(X_\alpha^{\sigma}
			\cap X_\beta^{\sigma}\right) 
			=\{O\}
	\ees
when $\alpha\neq\beta$. Thus, choosing $\alpha=1/4$ and $\beta=1/3$, we have that
		\be\label{eq_defpsi}
			\wt\psi:=g+\sum_{\sigma=1}^{k-1}\left(\psi_{_{1/4}}^\sigma
				+\psi_{_{1/3}}^\sigma\right)
		\ee
is a p.s.h. function on $U:=\bigcap _{\sigma=1}^{k-1}\left(U_{_{1/4}}^{\sigma}\cap U_{_{1/3}}^{\sigma}\right)$ satisfying the local versions of properties (a)-(c).

To complete the proof of Proposition~\ref{prop_Slapar}, we extend $\wt\psi$ to $\C^{3k-1}$. Choose $r>0$ small enough so that $B=\bar B_O(r)\subset U$.  As $B$ is polynomially convex, there is a smooth nonnegative p.s.h. function $\sigma$ on $\C^{3k-1}$ such that $B=\sigma^{-1}(0)$ and $\sigma$ is strictly p.s.h. on $\C^{3k-1}\setminus B$. Choose closed balls $B'$ and $B''$ so that $B\subset B'\subset B'' \subset U$. Let $\chi$ be a smooth function on $\C^{3k-1}$ that is $1$ on an open set containing $B$, $0$ on an open set containing $\C^{3k-1}\setminus U$, and always between $0$ and $1$. Then, for large enough $C>0$, $\psi=\chi\wt\psi+C\sigma$ has the desired properties.
\end{proof}

\begin{rmk}
The above result shows that if $p$ is a generic CR-singularity of a $2k$-manifold $M\subset\C^{3k-1}$, then any polynomially convex neighbourhood $N\subset M$ of $p$ is stable in the following sense: for small enough perturbations $\phi$ that are identity close to $p$, $\phi(N)$ is a polynomially convex neighbourhood of $p=\phi(p)$.
\end{rmk}

We now have the main ingredient to prove Theorem \ref{thm_deffn}.

\noindent {\em Proof of Theorem \ref{thm_deffn}.} 
For the proof of part (1), let $M$ be as given, and $S=\{p_1,...,p_n\}$. As each $p_j$, $j=1,...,n$, is a generic CR-singularity of $M$, we can use the biholomorphic equivalence of $(M, p_j)$ and $(\bc_k,O)$, together with Proposition~\ref{prop_Slapar}, to conclude that there exist pairwise disjoint open sets $U_j\ni p_j$ and smooth p.s.h. functions $\psi_j:\C^{3k-1}\rightarrow\rl$, $j=1,...,n$, such that 
\renewcommand{\theenumi}{\alph{enumi}}
	\begin{enumerate}
		\item $\{\psi_j=0\}=M\cap \overline{U_j}$,
		\item $\psi_j>0$ on $\C^{3k-1}\setminus (M\cap \overline{U_j})$, and
		\item $\psi_j$ is strictly p.s.h. on
			 $\C^{3k-1}\setminus\{p_j\}$. 
	\end{enumerate}
Let $\wt M:=M\setminus \bigcup_{1\leq j\leq n}U_j$. Then, as $\wt M$ is totally real in $\C^{3k-1}$, $\psi_0(z):=\dist^2(z,M)$ is strictly p.s.h. on some neighbourhood $U_0$ of $\wt M$ in $\C^{3k-1}$. Now, let $U:=\cup_{0\leq j\leq n}U_j$. The neighbourhoods $U_j$'s in the above construction should be chosen small enough so that $\pi:U\rightarrow M$ given by $z\mapsto p$, where $\dist(z,M)=\dist(p,z)$, is well-defined and smooth. Let $\{\chi_j\}_{0\leq j\leq n}$ be a partition of unity subordinate to $\{U_j\cap M\}_{0\leq j\leq n}$. Define
	\bes
		\rho(z):=\sum_{j=0}^n \chi_j(\pi(z))\psi_j(z).
	\ees 
Since $M\cap U_j\subseteq\{\psi=\nabla \psi_j=0\}$, we have that
	\bes
		dd^c\rho(p)=\sum_0^n \chi_j(p)dd^c\psi_j(p),
	\ees
when $p\in M$. Thus, $dd^c\rho(p)$ is strictly positive on any compact subset of $M\setminus\{p_1,...,p_n\}$. Moreover, since $\chi_j\equiv 1$ near $p_j$, $dd^c\rho=dd^c\psi_j$ near $p_j$. Thus, shrinking $U$ if necessary, we have that $\rho$ is p.s.h. on $U$ and strictly p.s.h. on $U\setminus S$.

To prove $(2)$,  we must extend the form $dd^c\rho$ globally to $\C^{3k-1}$ when $M$ is rationally convex. As a consequence of a characterization of rationally convex hulls due to Duval-Sibony (see~\cite[Remark~2.2]{DuSi95}), there is a smooth p.s.h. function $\theta:\C^{3k-1}\rightarrow\rl$ such that $\om=dd^c\theta$ vanishes on $M'$ and is strictly positive outside $M'$. Once again, we let $\chi$ be a nonnegative smooth function on $\C^{3k-1}$ that is compactly supported in $U$ and identically $1$ on some neighbourhood of $M$ in $U$. For a large enough $C$, the well-defined function $\varphi:=C\theta+\chi\Psi$ is strictly p.s.h. on $\C^{3k-1}\setminus S$. Since the gradient of $\psi$ vanishes along $M'$, we also have that $\iota^*dd^c\varphi=\iota^*dd^c\Psi=d(\iota^*d^c\Psi)=0$, where $\iota:M\rightarrow \C^{3k-1}$ is the inclusion map. Thus, $M$ is isotropic with respect to the degenerate K{\"a}hler form $dd^c\varphi$. 

To prove $(3)$, we must extend $\rho$ globally to $\C^{3k-1}$ when $M$ is polynomially convex. For this, let $\sigma$ be a smooth nonnegative p.s.h. function on $\C^{3k-1}$ so that $M=\sigma^{-1}(0)$ and $\sigma$ is strictly p.s.h. on $\C^{3k-1}\setminus M$. Then, for some nonnegative smooth function $\chi:\C^{3k-1}\rightarrow \rl$ that is compactly supported in $U$ and identically $1$ on some neighbourhood of $M$ in $U$, and for a large enough $C$, we relabel $C\sigma+\chi\rho$ as $\rho$ to obtain the desired extension.
\qed
\section{Appendix: Proof of Lemma~\ref{lem_Slaparfn}}\label{sec_append}
The main technical ingredient of Section \ref{sec_deffn} relies on Lemma~\ref{lem_Slaparfn}. It is a mild generalization of Lemma~$4$ in Slapar's work \cite{Sl04}, whose proof has been omitted there due to its close analogy with the proof of Lemma~$3$ therein. For the sake of completeness, we reproduce Slapar's technique to provide a full proof of Lemma~\ref{lem_Slaparfn}. 

\noindent{\em Proof of Lemma \ref{lem_Slaparfn}.} Recall that
	\bes
		S_\alpha=\left\{Z'=(z,w_1,...,w_{2k-2},w)\in\C^{2k}:
			\begin{aligned}
				&\ima(w_1)=\cdots=\ima(w_{2k-2})=0,\\
					&w=\frac{\alpha}{2}|z|^2+\frac{1}{4}(z^2+\zbar^2)
			\end{aligned}
		\right\}.
	\ees
We consider new real (nonholomorphic) coordinates in $\C^{2k}$, given by
	\begin{align}
		&x=\rea z,\: y=\ima z,\notag\\
		&u_j=\rea w_j, v_j=\ima w_j, \quad 1\leq j\leq 2k-2,\label{eq_coordch}\\
		&u=\rea w-\frac{\alpha}{2}|z|^2-\frac{1}{4}(z^2+\zbar^2),\: v=\ima w.\notag
\end{align}
In these coordinates, $S_\alpha=\{v_1=\cdots =v_{2k-2}=u=v=0\}$. Now, denoting $\partl{}{a}$ and $\secpartl{}{a}{b}$ by $\partial_a$ and $\partial_{a,b}$, respectively, we obtain some of the mixed second-order complex derivatives in the coordinates \eqref{eq_coordch} as follows. 
 \begin{align}
	&4 \partial_{z,\zbar}=\Delta_{x,y}
			-2\left((\alpha+1)x\partial_{x}
				+(\alpha-1)y\partial_{y}+\alpha\right)\partial_{u}\notag\\
					&\qquad \qquad +\Big((\alpha+1)^2 x^2+(\alpha-1)^2 y^2 \Big)\partial_{u},\notag\\
	&4\partial_{z,\wbar}=
		 \partial_{x,u}+\partial_{y,v}
			+i\left(\partial_{x,v}-\partial_{y,u}\right)	
				-\Big((\alpha+1)x-i(\alpha-1)y\Big)\partial_{u,u}\notag\\
					&\qquad \qquad-\Big((\alpha-1)y+i(\alpha+1)x\Big)\partial_{u,v},
						\label{eq_IIder}\\
		&4\partial_{w,\wbar}=\Delta_{u,v}\notag.
\end{align}
Consider the following homogenous polynomial in $\rl[x^2,y^2,u]$ of degree $4$.
	\bes
		P_\alpha(x^2,y^2,u)
			=u^4+\left((4\alpha+c)x^2-cy^2\right)u^3+\left(Ax^4+Bx^2y^2+A'y^4\right)u^2.
	\ees
Using \eqref{eq_IIder}, we have that $4\secpartl{P_\alpha}{z}{\zbar}=$	
\begin{align*}
&\left[\left(6(\alpha+1)^2-3(4\alpha+c)(3\alpha+2)+6A+B\right)x^2+\left(6(\alpha-1)^2+(9\alpha-6)c+B+6A'\right)y^2\right]2u^2\\
	 &{}+\left[(6(\alpha+1)^2(4\alpha+c)-4A(5\alpha+4))x^4
		+(4A'(4-5\alpha)-6(\alpha-1)^2c)y^4\right]u\\
		&+\left[(24\alpha((\alpha-1)^2-c)-20\alpha B)x^2y^2\right]u\\
	 &{}+\left((\alpha+1)^2x^2+(\alpha-1)^2y^2\right)\left(Ax^4+Bx^2y^2+A'y^4\right).
\end{align*}
For this expression to be nonnegative, it suffices for the following equalities and inequalities to hold. 
\renewcommand{\theenumi}{\arabic{enumi}}
	\begin{enumerate}
		\item $A=\dfrac{3(\alpha+1)^2(4\alpha+c)}{2(5\alpha+4)}$;
			\vspace{0.5em}
		\item $A'=\dfrac{3(\alpha-1)^2}{2(4-5\alpha)}c$;
			\vspace{0.5em}
		\item $B=\dfrac{6}{5}((\alpha-1)^2-c)$;
			\vspace{0.5em}
		\item $6A+B+6(\alpha+1)^2>3(4\alpha+c)(3\alpha+2)$;
			\vspace{0.5em}
		\item $6A'+B+6(\alpha-1)^2>(6-9\alpha)c$.
\end{enumerate}
Also, we want that $P_\alpha$ is strictly positive for $u\neq 0$ and $(x,y)$ small. We use the following lemma for this.

\begin{lemma}[{\cite[Lemma $2$.]{Sl04}}]\label{lem_posquad}
Let $p(x,y,u)=u^2+b_1(x,y)u+b_0(x,y)$, where $b_0, b_1$ are continuous functions in a neighbourhood of the origin in $\rl^3$, both vanishing at $(0,0)$. Suppose $b_1^2<4b_0$ for small $(x,y)\neq (0,0)$. Then, there exists a small neighbourhood $U$ of the origin in $\rl^3$ such that $p$ is strictly positive on $U\setminus\{u=0\}$. 
\end{lemma}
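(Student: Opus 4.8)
The plan is to reduce the statement to the elementary theory of real quadratic polynomials in one variable by completing the square in $u$. Since $b_0$ and $b_1$ depend only on $(x,y)$, one may write
\[
	p(x,y,u)=\left(u+\tfrac12 b_1(x,y)\right)^2+\left(b_0(x,y)-\tfrac14 b_1(x,y)^2\right),
\]
so that for each fixed $(x,y)$ the sign of $p(x,y,\cdot)$ is governed by the ``discriminant'' $b_0(x,y)-\tfrac14 b_1(x,y)^2$, which by hypothesis is positive for all small $(x,y)\neq(0,0)$.

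First I would use the hypothesis to choose a neighbourhood $U_0$ of the origin in $\rl^2$ such that $b_1^2<4b_0$ on $U_0\setminus\{(0,0)\}$, and then take $U=U_0\times\rl$ (or $U_0\times(-1,1)$ if a bounded neighbourhood is preferred). For a point $(x,y,u)\in U$ with $(x,y)\neq(0,0)$, the completed-square formula gives $p(x,y,u)\geq b_0(x,y)-\tfrac14 b_1(x,y)^2>0$, so $p$ is strictly positive at every such point, irrespective of whether $u$ vanishes; in particular no shrinking in the $u$-direction is needed away from the $(x,y)$-origin.

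It then remains to handle the points of $U$ lying over $(x,y)=(0,0)$. Because $b_0$ and $b_1$ both vanish at the origin, one has $p(0,0,u)=u^2$, which is strictly positive exactly when $u\neq0$. Combining the two cases shows that $p>0$ at every point of $U\setminus\{u=0\}$, as claimed; the continuity of $b_0,b_1$ is used only to ensure that the locus $\{b_1^2<4b_0\}$ actually contains a punctured neighbourhood of the origin, so that $U$ can be taken open.

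I do not anticipate any genuine difficulty here; the single point requiring attention is the fibre over the origin, where the discriminant degenerates to $0$ and positivity must be extracted from $u\neq0$ rather than from the strict inequality $b_1^2<4b_0$.
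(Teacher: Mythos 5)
Your proof is correct. The paper does not give its own proof of this lemma; it is quoted verbatim as Lemma~2 of \cite{Sl04} and used as a black box, so there is no in-paper argument to compare against. Your completion-of-square argument is the natural one and almost surely coincides with Slapar's. It in fact proves the slightly stronger statement that $p>0$ on $U\setminus\{(0,0,0)\}$: for $(x,y)\neq(0,0)$ the strictly positive discriminant $b_0-\tfrac{1}{4}b_1^2$ forces $p>0$ even on the slice $u=0$, while over the origin $p(0,0,u)=u^2$. The weaker phrasing ``$p>0$ on $U\setminus\{u=0\}$'' is precisely what the paper uses, since the lemma is applied to the degree-two factor of the quartic $P_\alpha=u^2\bigl(u^2+b_1u+b_0\bigr)$, and only positivity for $u\neq0$ is relevant there. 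One small inaccuracy in your closing remark: continuity of $b_0,b_1$ is not what supplies the punctured neighbourhood on which $b_1^2<4b_0$ --- that is already the content of the hypothesis ``for small $(x,y)\neq(0,0)$''; continuity is assumed simply so that $p$ itself is a continuous function on a neighbourhood of the origin.
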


\noindent The above lemma yields the following constraints on $A$, $A'$ and $c$. 
\begin{enumerate}
\setcounter{enumi}{5}
\item $(4\alpha+c)^2<4A$;
\item $c^2<4A'$.
\end{enumerate}

To find constants $A$, $B$ and $A'$ that are positive and satisfy inequalities $(4)-(7)$, it suffices to find a $c>0$ such that 
	\be\label{ineq_cond_c}
	c<\min\left\{(\alpha-1)^2,
				\frac{16+8\alpha-64\alpha^2-60\alpha^3}{11+30\alpha+20\alpha^2},
				 \frac{4(\alpha-1)^2(4-5\alpha)}{20\alpha^2-30\alpha+11},	
				  \frac{6-4\alpha-14\alpha^2}{4+5\alpha}
						\right\}.
	\ee
The above condition follows from the positivity assumption on $B$ and by writing inequalities $(4)-(7)$ purely in terms of $c$ and $\alpha$ with the means of $(1)-(3)$. As long as $\alpha<0.46$, the right-hand side of \eqref{ineq_cond_c} is positive. Thus, there exists a homogeneous polynomial $P_\alpha$ of degree $4$ in $\rl[x^2,y^2,u]$ such that 
	\begin{itemize}		
		\item $P_\alpha>0$ for $u\neq 0$ and $(x,y)$ small enough;
\vspace{0.5em} 
		\item $\secpartl{P_\alpha}{z}{\zbar}=0$ when $(x,y)=(0,0)$, but is strictly positive otherwise; and	
\vspace{0.5em} 
		\item $\secpartl{P_\alpha}{z}{\zbar}=q_1u^2+q_3$, where $q_1, q_3\in\rl[x^2,y^2]$ are polynomials of degree $1$ and $3$, respectively, with strictly positive coefficients. 
	\end{itemize}

Now consider 
	\bes
		Q_\alpha(z,w_1,...,w_{2k-2},w)
			=P_\alpha(x^2,y^2,u)+(x^2+y^2)u^4
				+\frac{1}{2}\left(\sum_{j=1}^{2k-2}v_j^2+v^2\right),
	\ees
where the coordinates $(z,w_1,...,w_{2k-2},w)$ and $(x,y,u_1,v_1,...,u_{2k-2},v_{2k-2},u,v)$ are as in \eqref{eq_coordch}. Note that
	\bes
		\secpartl{Q_\alpha}{z}{\zbar}
			=u^4-(40\alpha)u^3+(q_2+\eps q_1)u^2+(1-\eps)q_1u^2+q_3,
	\ees
where $q_2\in\rl[x^2,y^2]$ is of degree $2$. By Lemma~\ref{lem_posquad}, for any $\eps>0$, $u^4-(40\alpha)u^3+(q_2+\eps q_1)u^2$ is strictly positive for $u\neq 0$ and $(x,y)$ small enough. So, there is a neighbourhood $\V_\alpha$ of the origin such that 
		\be\label{eq_zzbar}
			\secpartl{Q_\alpha}{z}{\zbar}\geq R_3, 
		\ee
where $R_3=\sum r_{j,k,l}\ (x^2)^j (y^2)^k u^l$ is a homogeneous polynomial in $\rl[x^2,y^2,u]$ of degree $3$, which is nondegenerate in the sense that all $r_{j,k,l}>0$ whenever $l$ is even. Next, we have that 
	\be\label{eq_wwbar}
		\secpartl{Q_\alpha}{w}{\wbar}
			=\frac{\Delta_{u,v}P_\alpha}{4}+3(x^2+y^2)u^2+\frac{1}{4}>\frac{1}{8}.
	\ee
Using \eqref{eq_IIder}, we also note that
	\begin{align}
		\left|\secpartl{Q_\alpha}{z}{\wbar}\right|^2
		<&\ R_5,\label{eq_zwbar} 
	\end{align}
where $R_5(x^2,y^2,u)$ is some homogeneous polynomial of degree $5$. Combining these estimates, we have that 
	\bes	
		\secpartl{Q_\alpha}{z}{\zbar}\secpartl{Q_\alpha}{w}{\wbar}-
				\left|\secpartl{Q_\alpha}{z}{\wbar}\right|^2
			\geq R_3-R_5,
	\ees
which --- owing to the nondegeneracy of $R_3$ --- is positive on $\V_\alpha$ (shrinking if necessary) as long as $(x,y,u)\neq(0,0,0)$. As the characteristic polynomial of $\Chess Q_\alpha$ (in the variable $\lambda$) is 
	\bes
		\left(\lambda-\frac{1}{4}\right)^{2k-2}
		\left(\lambda^2-\left(\secpartl{Q_\alpha}{z}{\zbar}+\secpartl{Q_\alpha}{w}{\wbar}\right)\lambda
			+\secpartl{Q_\alpha}{z}{\zbar}\secpartl{Q_\alpha}{w}{\wbar}-
				\left|\secpartl{Q_\alpha}{z}{\wbar}\right|^2
	\right),
	\ees	
we obtain that $Q_\alpha$ is a p.s.h. function on $\V_\alpha$ satisfying
	\begin{itemize}
		\item $Q_\alpha^{-1}(0)\cap\V_\alpha=S_\alpha\cap \V_\alpha$.
		\item $Q_\alpha>0$ on $\V_\alpha\setminus S_\alpha$.
		\item $Q_\alpha$ is stirctly p.s.h. on $\V_\alpha\setminus\{x=y=u=0\}$.
	\end{itemize}

To complete the construction of $\rho_\alpha$, let $
		\eta(z,w_1,...,w_{2k-2},w)
			=\left(\frac{1}{2}+x^2+y^2\right)\left(\sum_{j=1}^{2k-2}v_j^2+v^2\right)$.
In a small enough neighbourhood $\V$ of the origin, $\eta$ is p.s.h., and strictly p.s.h. when $(v_1,\cdots,v_{2k-2},v)\neq (0,\cdots,0,0)$. Finally, to obtain the desired neighbourhood and function, set $V_\alpha:=~\V_\alpha\cap~\V$ and $\rho_\alpha:=Q_\alpha+\eta$. This completes the proof of Lemma~\ref{lem_Slaparfn}.
\qed

\bibliography{PolyCvxBiblio}
\bibliographystyle{plain}
%
\end{document}